\definecolor{blue}{rgb}{0,0,1}
\definecolor{red}{rgb}{1,0,0}
\definecolor{green}{rgb}{0,.7,0}
\newtheorem{theorem}{Theorem}[]
\newtheorem{lemma}[theorem]{Lemma}
\newtheorem{corollary}[]{Corollary}
\theoremstyle{definition}
\newtheorem{definition}[]{Definition}
\theoremstyle{remark}
\newtheorem{remark}[]{Remark}
\newtheoremstyle{thm}
  {12pt}
  {12pt}
  {\itshape}
  {\parindent}
  {\scshape}
  {.}
  {5pt}
  {}
\theoremstyle{thm}
\newtheorem*{T6.17}{Theorem \ref{gradientthm1}}
\newtheorem{thm}{Theorem}[]
\newtheoremstyle{prop}
  {12pt}
  {12pt}
  {\itshape}
  {\parindent}
  {\scshape}
  {.}
  {5pt}
  {}
\theoremstyle{prop}
\newtheoremstyle{lem}
  {12pt}
  {12pt}
  {\itshape}
  {\parindent}
  {\scshape}
  {.}
  {5pt}
  {}
\theoremstyle{lem}
\newtheorem*{L2.6}{Lemma \ref{lem2.4}}
\newtheoremstyle{defn}
  {12pt}
  {12pt}
  {\itshape}
  {\parindent}
  {\scshape}
  {.}
  {5pt}
  {}
\theoremstyle{defn}
\newtheoremstyle{examp}
  {12pt}
  {12pt}
  {}
   {\parindent}
  {\scshape}
  {.}
  {5pt}
  {}
\theoremstyle{examp}
\newtheoremstyle{cor}
  {12pt}
  {12pt}
  {\itshape}
  {\parindent}
  {\scshape}
  {.}
  {5pt}
  {}
\theoremstyle{cor}
\newtheoremstyle{recipe}
  {12pt}
  {12pt}
  {\itshape}
   {\parindent}
  {\scshape}
  {.}
  {5pt}
  {}
\theoremstyle{recipe}
\newtheoremstyle{rem}
  {12pt}
  {12pt}
  {}
   {\parindent}
  {\scshape}
  {.}
  {5pt}
  {}
\theoremstyle{rem}
\newtheoremstyle{quest}
  {12pt}
  {12pt}
  {\itshape}
  {\parindent}
  {\scshape}
  {.}
  {5pt}
  {}
\theoremstyle{quest}
\newtheorem{quest}[thm]{Question}
\newcommand{\bp}{\begin{proof}}
\newcommand{\ep}{\end{proof}}
\renewcommand{\epsilon}{\varepsilon}
\newcommand{\Z}{{\mathbb Z}}
\newcommand{\id}{\operatorname{id}}
\newcommand{\R}{{\mathbb R}}
\providecommand{\ker}[1]{$\text{ker}\ {#1}$}
\newcommand{\N}{{\mathbb N}}
\newcommand{\Q}{{\mathbb Q}}
\newcommand{\T}{\mathcal{T}}
\newcommand{\Rot}{\operatorname{Rot}}
\newcommand{\F}{\mathcal{F}}
\newcommand{\A}{\mathcal{A}}
\newcommand{\Diff}{\operatorname{Diff}}
\renewcommand{\u}{\tilde{u}}
\renewcommand{\v}{\tilde{v}}
\newcommand{\C}{\mathbb{C}}
\renewcommand{\max}{\textup{max}}
\renewcommand{\O}{\mathcal{O}}
\newcommand{\D}{\mathcal{D}}
\newcommand{\U}{\mathcal{U}}
\renewcommand{\L}{\mathcal{L}}
\newcommand{\M}{\mathcal{M}}
\newcommand{\fbar}{\bar{f}}
\renewcommand{\L}{\mathcal{L}}
\renewcommand{\O}{\mathcal{O}}
\renewcommand{\u}{\tilde{u}}
\renewcommand{\v}{\tilde{v}}
\newcommand{\gammadot}{\dot{\gamma}}
\newcommand{\psihat}{\hat{\psi}}
\newcommand{\Hess}{\textup{Hess}}
\newcommand{\Floer}{\textnormal{Floer}}
\gdef\hex{"}}
\mathchardef\laplace=\hex0001
\mathchardef\nabla=\hex0272
\def\@@dalembert#1#2{\setbox0\hbox{$#1\mathrm I$}

  \vrule height\ht0 depth\z@ width.04\ht0

  \rlap{\vrule height\ht0 depth-.96\ht0 width.8\ht0}

  \vrule height.1\ht0 depth\z@ width.8\ht0

  \vrule height\ht0 depth\z@ width.1\ht0 }
\def\dalembert{\mathbin{\mathpalette\@@dalembert{}}\,}
\begin{document}
\address{
    Barney Bramham\\
    Institute for Advanced Study, Princeton, NJ 08540}
\email{bramham@ias.edu}

\title[Rigidity of Liouville pseudo-rotations]{Pseudo-rotations with sufficiently Liouvillean rotation number are
$C^0$-rigid.}

\author[Barney Bramham]{Barney Bramham}
\maketitle

\vspace{-0.2in}
\begin{abstract}
It is an open question in smooth ergodic theory whether there exists a Hamiltonian disk map
with zero topological entropy and (strong) mixing dynamics.  Weak mixing has been known
since Anosov and Katok first constructed examples in 1970.  Currently all known examples
with weak mixing are irrational pseudo-rotations with Liouvillean rotation number on the boundary.
Our main result however implies that for a dense subset of Liouville numbers (strong) mixing
cannot occur.
Our approach involves approximating the flow of a suspension of the given disk map by
pseudoholomorphic curves.  Ellipticity of the Cauchy-Riemann
equation allows quantitative $L^2$-estimates to be converted into $C^0$-estimates between the
pseudoholomorphic curves and the trajectories of the flow on growing time scales.  Arithmetic properties
of the rotation number enter through these estimates.
\end{abstract}

\tableofcontents
\section{Introduction}
\subsection{Statement of results}
Pesin theory \cite{Pesin} shows that, at least in low dimensions, smooth conservative systems with
positive metric entropy have ergodicity on sets of positive measure.  It is a delicate question how
``wild'' a smooth system can be if on the other hand it has vanishing metric entropy, or stronger, vanishing
topological entropy.

%

In 1970 Anosov and Katok \cite{AnosovKatok} constructed examples of smooth Hamiltonian disk maps
which, despite having zero topological entropy, were ergodic.  They even found weak mixing examples.
The next level in the ergodic hierarchy, that of mixing\footnote{From here on we use exclusively the term
\emph{mixing} as defined in equation (\ref{E:mixing}).  There are sources in the literature where this notion
is referred to as strong mixing, e.g.\ \cite{Walters}.}, has remained an open question.
In the very interesting article of Fayad and Katok \cite{FayadKatok} this is stated as Problem 3.1
as follows:

\begin{quest}\label{Q:mixing}
 Does there exist a mixing area preserving diffeomorphism of the closed $2$-disk
 $D=\{(x,y)\in\R^2| x^2+y^2\leq1\}$ with zero topological (or metric) entropy?
\end{quest}

Since 1970 the picture has somewhat clarified, and we recall the developments\footnote{In fact the
seeds of this question originate much earlier in 1930 with a construction of
Shnirelman \cite{Shnirelman}.  He found a smooth disk map (not area preserving)
with a dense orbit.  Apparently Ulam was also interested in questions of this nature.  In the Scottish
Book \cite{Scottish}, circa 1935, problem 115, Ulam asks whether there exists a homeomorphism
of $\R^n$ with a dense orbit.  Besicovitch answered this affirmatively for tranformations of
the plane \cite{Bes} in 1951.  Shnirelman's construction is discussed also in \cite{FayadKatok}}.

First, in 1975 Ko$\check{\textup{c}}$ergin \cite{Koc} showed that for any compact surface
of genus $g\geq1$, mixing examples with vanishing topological entropy do exist.  This is
pointed out in \cite{FayadKatok}, where they remark that it is not known for the sphere.

To describe the developments on the disk we recall the following definition.

\begin{definition}
An \emph{irrational pseudo-rotation} is an area and orientation preserving diffeomorphism
$\varphi\in\Diff^\infty(D)$ of the closed $2$-disk fixing the origin and having no other periodic
points.
\end{definition}

Note that the circle map obtained by restricting an irrational pseudo-rotation to the boundary of
the disk automatically has irrational rotation number, hence the terminology.

Irrational pseudo-rotations are the source of all known Hamiltonian disk maps with zero topological entropy
that display weak mixing (or even ergodicity), and is therefore the natural place to look for mixing examples.
Work of Herman and Fayad-Saprykina however refine the search for mixing examples much further.

To describe these results let $\A_{\alpha}$ denote the set of irrational pseudo-rotations
having rotation number $\alpha$ on the boundary of the disk, where $\alpha\in\R/\Z$ is irrational.
Recall that $\alpha\in\R$ is a \emph{Liouville number}
if it is irrational and for all $k\in\N$ there exists $(p,q)\in\Z\times\N$ relatively prime for which
\begin{equation}\label{E:Liouville_condition}
		\left|\alpha - \frac{p}{q}\right|<\frac{1}{q^k}.
\end{equation}
In this paper we will denote the set of all Liouville numbers by $\L$.  These form a dense
set of measure zero in $\R$.
The set of irrational numbers not in $\L$ are called Diophantine, and we will denote these by $\D$.

Anosov and Katok \cite{AnosovKatok} showed that for $\alpha$ in
a dense subset of the Liouville numbers $\A_{\alpha}$ contains a weakly mixing element.
For $\alpha$ a Diophantine number Herman showed in unpublished work, although see also \cite{FayadKrikorian},
that every element of $\A_{\alpha}$ has invariant circles near the boundary of the disk, and in
particular cannot be mixing in any sense.
Finally in 2005 Fayad and Saprykina \cite{Fayad_Sap} constructed weak mixing examples in $\A_{\alpha}$
for \emph{all} $\alpha\in\L$.

To summarize then, the current situation for disk maps is that $\A_{\alpha}$ contains a weak mixing
disk map if and only if $\alpha$ is a Liouville number.

Our results apply to the following subset of the Liouville numbers.  We say that
an irrational number $\alpha\in\R$ is in $\L_*$ if for all $k\in\N$, there exists
$(p,q)\in\Z\times\N$ relatively prime, such that
\begin{equation}\label{E:exponential_Liouville_condition}
				\left|\alpha - \frac{p}{q}\right|<\frac{1}{e^{kq}}.
\end{equation}

\begin{remark}
$\L_*$ is a dense subset of $\L$, see appendix \ref{S:density_L_0}.
\end{remark}

Using pseudoholomorphic curve techniques from symplectic geometry we will prove the following.   

\begin{theorem}\label{T:main_thm}
Let $\varphi:D\rightarrow D$ be an irrational pseudo-rotation with boundary rotation number in
$\L_*$.   Then $\varphi$ is $C^{0}$-rigid.  That is, there exists
a sequence of iterates $\varphi^{n_{j}}$ that converge
to the identity map in the $C^{0}$-topology as $n_{j}\rightarrow\infty$.
\end{theorem}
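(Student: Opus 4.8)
The plan is to embed the dynamics of $\varphi$ into a contact three-manifold and analyze it with pseudoholomorphic curves, exactly along the lines announced in the abstract. First I would suspend $\varphi$: since $\varphi$ is area- and orientation-preserving and isotopic to the identity, one constructs a contact form $\lambda$ on $S^3$ (via an abstract open book with disk pages and monodromy $\varphi$, or equivalently by putting a stable Hamiltonian structure on the mapping torus of $\varphi$) whose Reeb flow $\phi^t$ admits a disk-like global surface of section $\Sigma\cong D$ with first return map smoothly conjugate to $\varphi$, the binding being a periodic Reeb orbit $P$ whose transverse rotation encodes $\alpha$. The hypothesis that $\varphi$ has no interior periodic points then says that $\phi^t$ has exactly one periodic orbit, namely $P$.

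Next, fixing a generic $\mathbb R$-invariant almost complex structure $J$ on $\mathbb R\times S^3$ compatible with $\lambda$, one invokes the finite-energy foliation together with the relevant compactness-and-gluing analysis to produce pseudoholomorphic curves adapted to $\varphi$. Concretely, for each continued-fraction convergent $p_j/q_j$ of $\alpha$ there is a finite-energy pseudoholomorphic disk $\tilde u_j:(D,\partial D)\to(\mathbb R\times S^3,\mathbb R\times P)$ whose $S^3$-projection $\Sigma_j$ is an embedded disk transverse to $\phi^t$ off $P$, with linking number $q_j$ about $P$; these curves exhaust $S^3\setminus P$ as $j\to\infty$, and because $\varphi$ has no interior periodic points none of them closes up into a genuine surface of section — each $\Sigma_j$ is only an \emph{approximate} surface of section for the time corresponding to $q_j$ returns.

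The crux is a quantitative conversion of energy ($L^2$) bounds into $C^0$ bounds using ellipticity of $\bar\partial_J$. One shows that the Reeb flow over the return time attached to $q_j$ iterations moves $\Sigma_j$ to a disk $\Sigma_j'$ with
\[
d_{C^0}\!\left(\Sigma_j',\,\Sigma_j\right)\;\le\; C(q_j)\,\bigl|q_j\alpha-p_j\bigr|,
\]
where the energy of the relevant deformation field is comparable to the symplectic-area ``defect'' of $\tilde u_j$, itself comparable to the arithmetic quantity $|q_j\alpha-p_j|$, and elliptic bootstrapping upgrades the resulting $L^2$-smallness of the deformation to $C^0$-smallness at the cost of a constant $C(q_j)$ growing at most simple-exponentially in $q_j$. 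Since $\alpha\in\mathcal{L}_*$, for every $k$ there is a reduced fraction $p_j/q_j$ with $|\alpha-p_j/q_j|<e^{-kq_j}$, hence $|q_j\alpha-p_j|<q_je^{-kq_j}$; choosing $k$ larger than the exponential rate of $C(\cdot)$ drives the right-hand side to $0$. As the $\Sigma_j$ sweep out all of $\Sigma\cong D$ and the return map of $\phi^t$ to $\Sigma$ after $q_j$ passes is conjugate to $\varphi^{q_j}$, we conclude $d_{C^0}(\varphi^{q_j},\mathrm{id})\to 0$, so the theorem holds with $n_j=q_j\to\infty$.

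The main obstacle is the estimate in Step 3: one must obtain the elliptic inequality with a constant whose dependence on the large integer $q_j$ is controlled — at most $e^{O(q_j)}$. A naive iteration of the linearized Cauchy–Riemann estimate over $q_j$ time-steps would produce doubly-exponential (or worse) growth, which no Liouville hypothesis could absorb; the point is to exploit the near-integrable structure of the curves $\tilde u_j$ close to $\mathbb R\times P$ — their asymptotic winding is rigidly pinned by the rotation number — so that the error propagates tamely. A secondary difficulty is to guarantee that the family $\{\Sigma_j\}$ genuinely exhausts the disk with enough uniformity (transversality, area control) as $j\to\infty$, and to accommodate the fact that the boundary circle map is only smooth, not a priori conjugate to a rotation, so that the needed approximation must be manufactured by the pseudoholomorphic curves rather than read off the boundary dynamics.
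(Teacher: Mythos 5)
Your overall strategy---approximate the long-time dynamics by pseudoholomorphic curves whose area ``defect'' is the arithmetic quantity $|q_j\alpha-p_j|$, and beat a simple-exponential constant using $\alpha\in\mathcal{L}_*$---is indeed the skeleton of the actual argument. But the step you yourself flag as the main obstacle is precisely the mathematical content of the theorem, and your proposal contains no argument for it: the inequality $d_{C^0}(\Sigma_j',\Sigma_j)\le C(q_j)\,|q_j\alpha-p_j|$ with $C(q_j)=e^{O(q_j)}$ is asserted, not derived, and the appeal to ``near-integrable structure'' and asymptotic winding pinned by the rotation number is not a proof. Worse, the mechanism you propose---recover $\varphi^{q_j}$ as the $q_j$-th return to an approximate surface of section and track how the flow moves $\Sigma_j$---naturally invites iterating an estimate $q_j$ times, which is exactly how the uncontrollable (doubly-exponential) constants you worry about arise. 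As written this is a genuine gap.

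The paper closes this gap by a device that avoids any iteration. It works not in a contact $S^3$ with binding orbit but in $W_n=\R\times\R/n\Z\times D$ over the $n$-fold mapping torus, and the leaves of the finite energy foliation project to solutions $z\in C^\infty(\R^+\times\R/n\Z,D)$ of the Floer equation $\partial_s z+i(\partial_t z-X_{H^t}(z))=0$ with boundary on the totally real tori $\{c\}\times\R/n\Z\times\partial D$ and boundary degree $\lfloor n\alpha\rfloor$; these satisfy the exact identity $\|\partial_s z\|^2_{L^2}=\{n\alpha\}\pi$ and, along a subsequence with $\{n_j\alpha\}\to 0$, uniform $C^1$ bounds coming from the compactness theory (both quoted from the prior construction, as is the filling property you list as a ``secondary difficulty''). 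Two elementary inputs then give the exponential constant with the right dependence: an interpolation inequality $\|f\|^2_{L^\infty}\le c\,\|f\|_{L^2}\|f\|_{W^{1,\infty}}$ on $\R^+\times\R/n\Z$ with $c$ independent of the period, yielding $\|\partial_s z_j\|_{L^\infty}\le M\{n_j\alpha\}^{1/4}$, and a single Gronwall comparison of the loop $t\mapsto z_j(s,t)$ (which is $n_j$-periodic in $t$ by construction, since the curve lives on the $n_j$-fold mapping torus) with the Hamiltonian trajectory $t\mapsto\phi_H^t(p)$ over the whole interval $[0,n_j]$, whose constant is $B=\sup\|\Hess(H^t)\|$, independent of $n_j$. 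This gives $d_{C^0}(\varphi^{n_j},\id_D)\le M\{n_j\alpha\}^{1/4}\,n_j e^{Bn_j}$ in one stroke, and $\alpha\in\mathcal{L}_*$ (so $\{n_j\alpha\}\le n_j e^{-jn_j}$ along a subsequence) finishes. To salvage your formulation you would need an analogous global-in-time comparison---for instance passing to the $q_j$-fold cover so the holomorphic curve itself has period $q_j$---rather than propagating an estimate through $q_j$ returns; note also that your stated boundary condition $(D,\partial D)\to(\R\times S^3,\R\times P)$ is not the one the construction uses.
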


As an immediate consequence we have: 

\begin{corollary}\label{C:no_mixing}
If $\varphi$ is an irrational pseudo-rotation with its boundary rotation number in $\L_*$ then $\varphi$ is
not mixing.
\end{corollary}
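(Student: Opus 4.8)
The plan is to read Corollary \ref{C:no_mixing} off Theorem \ref{T:main_thm} directly: $C^0$-rigidity produces a subsequence of iterates that is asymptotically trivial, and this is incompatible with the asymptotic decorrelation demanded by mixing. Recall that an irrational pseudo-rotation $\varphi$ preserves the normalized Lebesgue measure $\mu$ on $D$, and that $\varphi$ being \emph{mixing} means (equation (\ref{E:mixing})) that $\int_D (f\circ\varphi^{n})\,g\,d\mu\to\big(\int_D f\,d\mu\big)\big(\int_D g\,d\mu\big)$ as $n\to\infty$ for all $f,g\in L^2(D,\mu)$; equivalently, $\mu(\varphi^{-n}A\cap B)\to\mu(A)\mu(B)$ for all measurable $A,B\subseteq D$.

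Suppose, for contradiction, that $\varphi$ has boundary rotation number in $\L_*$ and is mixing. By Theorem \ref{T:main_thm} there is a sequence $n_j\to\infty$ with $\|\varphi^{n_j}-\id\|_{C^0(D)}\to0$. Fix a non-constant continuous function $g\colon D\to\R$. Since $D$ is compact, $g$ is uniformly continuous, so $g\circ\varphi^{n_j}\to g$ uniformly on $D$, hence in $L^2(D,\mu)$ (as $\mu(D)=1$). Taking $f=g$ in the mixing condition and restricting to the subsequence $(n_j)$, the quantity $\int_D(g\circ\varphi^{n_j})\,g\,d\mu$ converges simultaneously to $\int_D g^2\,d\mu$ (by the $L^2$-convergence just noted) and to $\big(\int_D g\,d\mu\big)^2$ (by mixing). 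Hence $\int_D g^2\,d\mu=\big(\int_D g\,d\mu\big)^2$, i.e.\ equality holds in the Cauchy--Schwarz inequality $\big(\int_D g\,d\mu\big)^2\le\mu(D)\int_D g^2\,d\mu$. This forces $g$ to be $\mu$-almost everywhere constant, and since $g$ is continuous and $\supp\mu=D$, $g$ is constant on $D$ --- contradicting the choice of $g$. Therefore $\varphi$ is not mixing.

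There is no serious obstacle here; the only point needing care is the interface between the measure-theoretic notion of mixing and the topological conclusion of Theorem \ref{T:main_thm}. The bridge is that a fixed continuous test function is uniformly continuous on the compact disk $D$, which converts $C^0$-closeness of $\varphi^{n_j}$ to $\id$ into $L^2$-closeness of $g\circ\varphi^{n_j}$ to $g$. If one prefers to use the set-theoretic form of mixing, one takes instead $A=B$ a round subdisk with $0<\mu(A)<1$: since $\partial A$ is $\mu$-null, $C^0$-rigidity gives $\mu(\varphi^{-n_j}A\,\triangle\,A)\to0$, hence $\mu(\varphi^{-n_j}A\cap A)\to\mu(A)$, while mixing would force this to tend to $\mu(A)^2$, and $\mu(A)\ne\mu(A)^2$ gives the contradiction.
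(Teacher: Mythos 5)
Your proof is correct, and it follows the same basic route as the paper: invoke Theorem \ref{T:main_thm} to get $\varphi^{n_j}\to\id_D$ in $C^0$, and show this is incompatible with the mixing limit (\ref{E:mixing}). The only difference is in how the contradiction is extracted. The paper's argument is slightly more direct: it takes two \emph{disjoint} compact sets $A,B$ of positive measure, notes they are a positive distance apart, so $\varphi^{-n_j}(A)\cap B=\emptyset$ for large $j$, while mixing would force $\mu(\varphi^{-n_j}(A)\cap B)\to\mu(A)\mu(B)>0$. Your main variant instead passes to the $L^2$-correlation formulation of mixing and uses equality in Cauchy--Schwarz with a continuous test function; this works, but it quietly relies on the standard (unproved here) equivalence between the set-theoretic definition (\ref{E:mixing}) and decay of correlations for $L^2$ functions, an extra layer the paper avoids. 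Your alternative variant --- a single round subdisk $A$ with $0<\mu(A)<1$, $\mu(\varphi^{-n_j}A\,\triangle\,A)\to0$ because the symmetric difference is trapped in a thin neighborhood of the $\mu$-null boundary, versus the mixing limit $\mu(A)^2\neq\mu(A)$ --- stays entirely within the paper's definition and is essentially on par with the paper's proof, just trading ``disjoint sets stay disjoint'' for a symmetric-difference estimate. All versions are sound; the paper's choice of disjoint compact sets simply minimizes the measure-theoretic bookkeeping.
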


\begin{remark}
In fact such a $\varphi$ is not even topologically mixing.
Recall that a transformation $\varphi:D\rightarrow D$ is topologically mixing if for all open sets
$U,V\subset D$, $\varphi^{-n}(U)\cap V$ is non-empty for all $n$ sufficiently large.  This is implied
by mixing with respect to Lebesgue measure.
\end{remark}

To see how corollary \ref{C:no_mixing} follows from theorem \ref{T:main_thm} we recall the notion of mixing.
Let $\mu$ denote Lebesgue measure on the disk, normalized so that $\mu(D)=1$.
A $\mu$-measure preserving map $\varphi:D\rightarrow D$ is said to be \emph{mixing} if
\begin{equation}\label{E:mixing}
					\lim_{n\rightarrow\infty}\mu(\varphi^{-n}(A)\cap B)=\mu(A)\mu(B)
\end{equation}
for any pair of Lebesgue measurable sets $A,B\subset D$.
In comparison, $\varphi$ is \emph{weak mixing}
if
\begin{equation}\label{E:weak_mixing}
	\lim_{n\rightarrow\infty}\sum_{j=0}^{n-1}\Big|\mu(\varphi^{-j}(A)\cap B)-\mu(A)\mu(B)\Big|=0.
\end{equation}
for all Lebesgue measurable $A,B\subset D$.

\begin{proof}[Proof of corollary \ref{C:no_mixing}]
Let $\varphi$ be an irrational pseudo-rotation with boundary rotation number in $\L_*$.  By theorem \ref{T:main_thm},
$\varphi^{n_j}\rightarrow\id_D$ in the $C^0$-topology for some subsequence $n_j\rightarrow+\infty$.
Therefore also $\varphi^{-n_j}\rightarrow\id_D$.  Take any two disjoint compact measurable sets $A,B\subset D$,
each having non-zero measure.  These are a positive distance apart.  For all $j$ sufficiently large
$\varphi^{-n_j}(A)$ is therefore disjoint from $B$.  On the other hand $\mu(A)\mu(B)>0$.  So the
limit in (\ref{E:mixing}) cannot hold, and therefore $\varphi$ cannot be mixing.
\end{proof}

Theorem \ref{T:main_thm} raises the question whether irrational pseudo-rotations
with \emph{arbitrary} Liouvillean rotation number are $C^{0}$-rigid.  However, naively it would seem 
at least as plausible for Diophantine numbers, since the Liouvillean case typically 
permits ``wilder'' behavior.  This suggests then the following open question.  

\begin{quest}\label{Q:open1}
Is every irrational pseudo-rotation $C^{0}$-rigid?  In other words, if $\varphi:D\rightarrow D$ is any
irrational pseudo-rotation, must there exist a sequence of iterates $\varphi^{n_j}$ converging to the
identity map in the $C^0$-topology?
\end{quest}

Note that answering this question affirmatively for 
Diophantine rotation numbers is a necessary condition for an affirmative answer to the following 
question of Herman, which remains open.  Herman \cite{Herman_ICM} in 1998 asked: 

\begin{quest}\label{Q:open2}
If $\alpha\in\D$ is Diophantine, is every element of $\A_{\alpha}$ smoothly conjugate to
the rigid rotation $R_{2\pi\alpha}$?
\end{quest}

Fayad and Krikorian \cite{FayadKrikorian} have successfully answered Herman's question
for irrational pseudo-rotations sufficiently close, in a differentiable sense, to the
rigid rotation with the same rotation number.

%
%

\subsection{Using a PDE to approximate an ODE}
The idea of the proof of theorem \ref{T:main_thm} can be described well on the level of partial and ordinary
differential equations.

Let $\varphi\in\Diff^{\infty}(D,\omega_{0})$ be an irrational pseudo-rotation and
$\alpha\in\R/\Z$ the rotation number on the boundary.  Let
$H^{t}\in C^{\infty}(D,\R)$ be a closed loop of Hamiltonians generating $\varphi$, over $t\in\R/\Z$.
Denote the corresponding Hamiltonian vector fields on $D$ by $X_{H^{t}}$.
Informally we find a sequence of
``approximating'' (also time-dependent) vector fields on the disk, which converge in $C^{0}$,
\begin{equation}\label{E:convering_vfields}
			X_{n}^{t}\rightarrow X_{H^{t}}
\end{equation}
as $n\rightarrow\infty$.
The trajectories of $X_{n}^{t}$ will close up in time $n$ by construction.  If the convergence
is sufficiently fast this behavior will be reflected in the trajectories of $X_{H^{t}}$, and
the identity map will be an accumulation point for the sequence $\{\varphi^{n}\}_{n\in\N}$.

For each $n\in\N$ the vector field $X_{n}^{t}$ arises as follows.
One can ``fill'' the disk by a family $\M_{n}$ of solutions $z:\R^{+}\times\R/n\Z\rightarrow D$
to the Floer equation
\begin{equation}\label{E:8734}
		\partial_{s}z(s,t)+i\Big(\partial_{t}z(s,t)-X_{H^{t}}(z(s,t))\Big)=0.
\end{equation}
These solutions arise from a foliation of the $4$-manifold $\R\times \R/n\Z\times D$ by pseudoholomorphic
curves constructed in \cite{Bramham_approx}.  For each $t$ a unique solution passes through every point
of the disk, and informally we can define a time-dependent vector field
$\{X_{n}^{t}\}_{t\in\R/\Z}$ on $D$, by declaring
\[
						X_{n}^{t}(z_{n}(s,t)):=\partial_{t}z_{n}(s,t)
\]
for all the solutions $z_{n}$ in $\M_{n}$.  The trajectories of this vector
field are all $n$-periodic because of the cylindrical domain of each $z_{n}$.
The convergence in (\ref{E:convering_vfields}) arises as follows.
We can suggestively rewrite equation (\ref{E:8734}) as
\begin{equation}\label{E:8735}
		\partial_{s}z_n+i\big(X_{n}^t(z_n)-X_{H^t}(z_n)\big)=0.
\end{equation}
It turns out that for such a solution $z_n$, there is a nice expression for the $L^2$-norm of
the first term in (\ref{E:8735}).  Indeed, if we arrange for the right boundary behavior for the
solutions $z_{n}$ then
\[
  \int_{s=0}^{\infty}\int_{t=0}^{n}\left|\frac{\partial z_n}{\partial s}(s,t)\right|^{2}dsdt=\{n\alpha\}\pi,
\]
where $\{n\alpha\}$ denotes the fractional part of $n\alpha\in\R$.
Since $\alpha$ is irrational there exists a subsequence $\{n_j\alpha\}\rightarrow0$ as
$j\rightarrow\infty$.  Thus, the left-most term
in (\ref{E:8734}) decays to zero uniformly in an $L^2$-sense for such a subsequence.
Using the ellipticity of the equation this can be strengthened to convergence in an
$L^{\infty}$-sense (in fact in a $C^{\infty}$-sense but we do not use this).  From (\ref{E:8735})
it follows that $|X_{n_j}^t-X_{H_t}|_{L^{\infty}}\rightarrow0$ as $j\rightarrow\infty$.

The Liouvillean condition on the rotation number $\alpha$ enters the estimates because the
rate of convergence on growing time scales depends on the rate at which $\{n_j\alpha\}\rightarrow0$.

\subsection{Acknowledgements}
I am greatly indepted to Helmut Hofer for many valuable discussions regarding the result in this
article, the ideas that lead up to it, and for helpful comments on
earlier versions of this paper.  I would also like to thank Anatole Katok for a number of very helpful
discussions.  His generous expertise in this area was greatly appreciated and
helped my understanding of many questions that relate to this article.  I also thank Federico Rodriguez Hertz
for discussions with him on a visit to Penn State, and Vadim Kaloshin for helpful conversations at Maryland.

On a visit to the ENS Paris several people spent generous amounts of their time, especially Alain Chenciner,
Jacques F\'ejoz, and Fr\'ed\'eric Le Roux.  The ensuing valuable discussions were much appreciated.
And I would particularly like to thank Claude Viterbo for the invitaton and for spending considerable
time in helpful conversations.

Finally, I would like to acknowledge the wonderful working environment at the IAS Princeton.  I have
benefitted from conversations here with many people.  I would particularly like to mention Abed
Bounemoura, Hakan Eliasson, \'Alvaro Pelayo, Kris Wysocki, and Edi Zehnder.

This work is based upon work supported by the National Science Foundation under agreement
No.\ DMS-0635607.  Any opinions, findings and conclusions or recommendations in this
material are those of the author and do not necessarily reflect the views of the National
Science Foundation.

\section{Preliminaries}\label{S:preliminaries}
We will study an irrational pseudo-rotation $\varphi\in\Diff^\infty(D,\omega_0)$ as generated by
a time-dependent Hamiltonian $H\in C^\infty(\R/\Z\times D,\R)$.  This means
the following.  For each $t\in\R/\Z$ abbreviate
\[
			      H^t:=H(t,\cdot)\in C^\infty(D,\R)
\]
and let $X_{H^t}$ be the unique $C^\infty$-smooth vector field on $D$ satisfying
\[
		      \omega_0(X_{H^t}(\xi),\cdot)=-dH^t(\xi)
\]
at $\xi\in D$.  If this vector field is tangent to the boundary of the disk then it generates a
$1$-parameter family of symplectic diffeomorphisms
\[
 \R\ni t\mapsto\phi_H^t\in\Diff^\infty(D,\omega_0)
\]
where for each $\xi\in D$, the curve $t\mapsto\phi_H^t(\xi)$ is the unique solution to
\[
 \left\{\begin{aligned}
	  \frac{d}{dt}\phi_H^t(\xi)&=X_{H^t}(\phi_H^t(\xi))\\
		      \phi_H^0(\xi)&=\xi.
        \end{aligned}\right.
\]
In particular $\phi_H^0=\id_D$.  We say that the Hamiltonian $H$ generates the disk map $\varphi$
if $\phi_H^1=\varphi$.  It is well known that any element of $\Diff^\infty(D,\omega_0)$ can be
generated in this manner by some suitable Hamiltonian $H$\!
\footnote{If $\psi\in\Diff^\infty(D,\omega_0)$ is the identity map
on the boundary up to first order then an isotopy from $\psi$ to $\id_{D}$ in $\Diff^\infty(D)$
can be converted to a symplectic isotopy using a Moser type argument.
For a general map $\psi\in\Diff^\infty(D,\omega_0)$ the first step therefore is to symplectically
``straighten up'' at the boundary.  That is, find a symplectic isotopy from $\psi$
to a map $\psihat\in\Diff^\infty(D,\omega_0)$  where $\psihat$ is the identity map
on the boundary up to first order.  That is, for all $\xi\in\partial D$
$\psihat(\xi)=\xi$ and $D\psihat(\xi)=\id_{\R^2}$.  This is easy to arrange.}.
In particular, any irrational pseudo-rotation.

So suppose that $H\in C^\infty(\R/\Z\times D,\R)$ generates a given irrational pseudo-rotation
$\varphi\in\Diff^\infty(D,\omega_0)$.  By assumption $\varphi$ has a unique fixed point $0\in D$,
and without loss of generality we may find $H$ so that $0\in D$ is a rest point for each vector
field $X_{H^t}$, $t\in\R/\Z$.  Thus for each $n\in\N$ the unique $n$-periodic solution
$\gamma:\R\rightarrow D$ to $\gammadot(t)=X_{H^t}(\gamma(t))$ is the constant trajectory
$\gamma(t)\equiv 0\in D$.

The map $\varphi:D\rightarrow D$ restricts to an orientation preserving diffeomorphism on the
boundary circle, $\varphi|_{\partial D}:\partial D\rightarrow\partial D$.
By assumption $\varphi|_{\partial D}$ has no periodic points and therefore its rotation number
$\Rot(\varphi)\in\R/\Z$ is irrational.  Since we have also fixed a Hamiltonian $H$ generating
$\varphi$, this allows us to associate a preferred real number, that we will denote by
\[
				  \Rot(\varphi;H)\in\R,
\]
with the property that the induced element on the circle $[\Rot(\varphi;H)]\in\R/\Z$ is equal
to $\Rot(\varphi)$.  See definition 5 in \cite{Bramham_approx}.

Let $\R^{+}=[0,\infty)$.  For each $n\in\N$, let
\[
 \M(H,n)
\]
denote the set of solutions
$z\in C^\infty(\R^{+}\times\R/n\Z,D)$ to the Floer equation
\begin{equation}\label{E:Floer_eqn}
      \partial_sz(s,t)+i\Big(\partial_tz(s,t)-X_{H^t}(z(s,t))\Big)=0
\end{equation}
for all $(s,t)\in\R^+\times\R/n\Z$, where $i$ is the standard complex structure on $D$
inherited from the complex plane,  which satisfy the following asymptotic and boundary conditions:
\begin{equation}\label{E:boundary_condns}
  \left\{\begin{aligned}
		  &\lim_{s\rightarrow\infty} z(s,t)=0& &\mbox{ for all }t\in\R/n\Z  \\
		  &z(0,t)\in\partial D& &\mbox{ for all }t\in\R/n\Z \\
		  &z(0,\cdot):\R/n\Z\rightarrow\partial D& &\mbox{ has degree }\lfloor n\alpha\rfloor.
	   \end{aligned}\right.
\end{equation}
Note that the asymptotic condition that the loops $z(s,\cdot)\rightarrow 0$ in $C^0(\R/n\Z,D)$ as
$s\rightarrow\infty$ is equivalent to finiteness of the Floer energy of $z$:
\begin{equation}\label{E:Floer_energy}
	E_{\Floer}(z):=\int_{s=0}^{\infty}\int_{t=0}^n\big|z_s(s,t)\big|^2+\big|z_t(s,t)-X_{H^t}(s,t)\big|^2ds dt,
\end{equation}
because there is only one $n$-periodic orbit of the Hamiltonian vector field.
The norm in the integrand is from the standard Euclidean
metric.  Similarly, in the rest of the paper, all Sobolev and $C^r$ function spaces for maps into
$D\subset\R^2$ are with respect to the standard Euclidean norm on $\R^2$.

The existence result we use is the following.

\begin{theorem}\label{T:M(n,k)}
For each $n\in\N$ the space of solutions $\M(H,n)$ is
non-empty, and the following holds:
\begin{itemize}
 \item\textbf{Filling property:} For all $p\in D\backslash\{0\}$ there exists a solution $z\in\M(H,n)$
such that $p$ lies in the image of $z$.
 \item$\mathbf{L^2}$\textbf{-estimates:}  For all $z\in\M(H,n)$,
 \begin{equation}\label{E:L2_estimates}
	    \|\partial_sz\|^2_{L^2([0,\infty)\times\R/n\Z)}=\{n\alpha\}\pi
\end{equation}
where for $x\in\R$, $\{x\}\in[0,1)$ denotes its fractional part.
 \item$\mathbf{C^\infty}$\textbf{-bounds:} Let $n_{j}$ be a subsequence for which
 $\{n_{j}\alpha\}\rightarrow0$ as $j\rightarrow\infty$.  For all $r\in\N$, there exists
 $b_r\in(0,\infty)$ such that
 \begin{equation}\label{E:Cr_estimates}
		      \|\nabla z\|_{C^r([0,\infty)\times\R/n_{j}\Z)}\leq b_r
\end{equation}
for all $z\in\M(H,n_{j})$, uniformly in $j$.
\end{itemize}
\end{theorem}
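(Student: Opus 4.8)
The plan is to read off the three assertions from the foliation of $\R\times\R/n\Z\times D$ constructed in \cite{Bramham_approx}, together with an energy identity and with standard elliptic estimates for the Cauchy--Riemann operator; the one genuinely new point will be the \emph{uniformity in $j$} of the $C^{r}$-bounds. For non-emptiness and the filling property I would invoke the existence theorem of \cite{Bramham_approx}: after the Gromov-type reformulation of the Floer equation (\ref{E:Floer_eqn}) as a genuine pseudoholomorphic curve equation on the $4$-manifold $\R\times\R/n\Z\times D$ (with an admissible almost complex structure), the solutions satisfying (\ref{E:boundary_condns}) are exactly the leaves of a smooth foliation of $\R\times\R/n\Z\times D$ by finite-energy pseudoholomorphic half-cylinders, the constant orbit $\gamma\equiv 0$ and the boundary $\R\times\R/n\Z\times\partial D$ being the degenerate leaves. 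Because this is a foliation, each point $(s,t,p)$ with $p\in D\setminus\{0\}$ lies on a unique leaf; projecting that leaf to $D$ produces a solution $z\in\M(H,n)$ whose image contains $p$, and in particular $\M(H,n)\neq\emptyset$.

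Next I would establish the $L^{2}$-identity (\ref{E:L2_estimates}) by the standard energy computation. Using the Floer equation one gets the pointwise identity $|\partial_{s}z|^{2}=z^{*}\omega_{0}(\partial_{s},\partial_{t})-\partial_{s}\big(H^{t}(z(s,t))\big)$. Integrating over $[0,\infty)\times\R/n\Z$: the last term telescopes in $s$ and, by the asymptotic condition $z(s,\cdot)\to 0$, contributes $-\int_{0}^{n}\big(H^{t}(0)-H^{t}(z(0,t))\big)\,dt$; the first term is $\int z^{*}\omega_{0}$, which by Stokes' theorem — the contribution at $s=\infty$ vanishing — reduces to the integral over the loop $z(0,\cdot)$ of a primitive of $\omega_{0}$, a loop of degree $\lfloor n\alpha\rfloor$ on $\partial D$. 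Feeding in the normalization of $H$ encoded in $\Rot(\varphi;H)$, so that the boundary term contributes $n\alpha\pi$ while the area term contributes $-\lfloor n\alpha\rfloor\pi$, yields $\|\partial_{s}z\|^{2}_{L^{2}}=\{n\alpha\}\pi$. Since this is essentially the computation of \cite{Bramham_approx}, I would cite it and only fix the sign conventions.

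The uniform $C^{\infty}$-bounds are the step that needs new work, since in \cite{Bramham_approx} the integer $n$ is fixed whereas here the circumference $n_{j}$ of the domain grows. I would argue in two stages. Stage one, a uniform $C^{0}$-bound on $\nabla z$: every $z\in\M(H,n_{j})$ has image in the compact disk $D$ and, by (\ref{E:L2_estimates}) together with the Floer equation, Floer energy $E_{\Floer}(z)=2\{n_{j}\alpha\}\pi\le 2\pi$, so no energy concentration can occur. Indeed, if $\sup|\nabla z|$ were unbounded along a sequence in $\bigcup_{j}\M(H,n_{j})$, then rescaling at points of maximal gradient would produce in the limit a non-constant pseudoholomorphic sphere, or a non-constant pseudoholomorphic disk with boundary on the totally real submanifold $\R\times\R/n\Z\times\partial D$, each carrying at least a fixed quantum of energy $\hbar>0$ coming from the local geometry of the target, which is independent of $n$; but the available energy $2\{n_{j}\alpha\}\pi\to 0$ is eventually smaller than $\hbar$, a contradiction. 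Hence $\|\nabla z\|_{C^{0}}\le b_{0}$ for all such $z$. Stage two, elliptic bootstrapping: writing (\ref{E:Floer_eqn}) as $\partial_{s}z+i\partial_{t}z=iX_{H^{t}}(z)$, the right-hand side is a fixed smooth function of $(s,t,z)$ with all derivatives bounded on $[0,\infty)\times\R/n_{j}\Z\times D$, since $H$ is fixed and $z$ ranges in the compact set $D$. Applying interior $L^{p}$- and Schauder estimates for $\bar\partial$ on balls of a fixed radius in the region $s>1$, together with the corresponding boundary estimates at $s=0$ for the Lagrangian condition $z(0,\cdot)\in\partial D$ on half-balls of a fixed radius, and iterating from the $C^{0}$-bound on $\nabla z$, gives bounds $\|\nabla z\|_{C^{r}(B)}\le b_{r}$ on each such ball $B$, with $b_{r}$ depending only on $r$, on $H$, and on the ball radius — in particular independent of $j$ and of the center of $B$, by translation invariance in $(s,t)$. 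Covering $[0,\infty)\times\R/n_{j}\Z$ by such balls with uniformly bounded overlap gives (\ref{E:Cr_estimates}).

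The point I expect to be delicate is not any single inequality but maintaining uniformity in $j$ throughout: because the domains grow without bound, one must use only local elliptic estimates on fixed-size balls, whose constants are therefore $j$-independent, and must exclude bubbling purely via the energy bound $E_{\Floer}(z)=2\{n_{j}\alpha\}\pi\to 0$ — never via an argument that degenerates as $n_{j}\to\infty$. The Gromov reformulation of (\ref{E:Floer_eqn}) as a genuine pseudoholomorphic curve equation, and the resulting uniform lower bound $\hbar$ on bubble energies coming from the fixed local geometry of $\R\times\R/n\Z\times D$, are precisely what make the $C^{0}$-step, and hence everything downstream, go through uniformly in $j$.
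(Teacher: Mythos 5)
Your proposal reaches the statement by a partly different distribution of labor than the paper.  The paper's own proof is almost entirely translation-plus-citation: it passes to the mapping torus $Z_n=\R/n\Z\times D$ and the almost complex $4$-manifold $W_n=\R\times Z_n$, records in Lemma \ref{L:CR_Floer_relation} (lemma 6 of \cite{Bramham_approx}) that elements of $\M(J_n)$ are exactly the graphs $(s,t,z(s,t))$ of elements of $\M(H,n)$, and then quotes \cite{Bramham_approx} for all three bullet points: theorem 8 there for the foliation (hence non-emptiness and the filling property), lemma 9 for $E_\omega(\u)=\{n\alpha\}\pi$, which is the $L^2$-identity, and corollary 24 together with proposition 22 for the gradient bounds uniform in $j$.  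You follow the same route for existence and filling, and effectively also for the $L^2$-identity (your Stokes computation has the right shape and you defer to the citation), but for the $C^\infty$-bounds you substitute a self-contained argument: bubbling-off with an energy quantum $\hbar$, then local elliptic bootstrapping on (half-)balls of fixed radius.  That is a legitimate alternative --- it is essentially the analysis carried out in the companion paper --- and it has the virtue of making visible why the constants are independent of $j$ (fixed local geometry of the equation, $\omega$-energy tending to zero), whereas the present paper simply inherits the uniformity from \cite{Bramham_approx}.

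Two cautions.  First, in the $L^2$-identity the term $\int_0^n\big(H^t|_{\partial D}-H^t(0)\big)\,dt$ cannot be arranged by a ``normalization of $H$'': adding functions of $t$ alone to $H$ changes neither this quantity nor the isotopy, and the needed identity $\int_0^1\big(H^t|_{\partial D}-H^t(0)\big)\,dt=\pi\alpha$ is a genuine fact tied to $\varphi$ having no periodic points and to the definition of $\Rot(\varphi;H)$; this is exactly the content of lemma 9 of \cite{Bramham_approx}, so it should remain a citation (or be proved), not a one-line normalization.  Second, run the bubbling analysis for the disk-valued map $z$, whose target $D\subset\C$ is compact (interior bubbles are excluded by Liouville's theorem, and boundary bubbles with boundary on $\partial D$ have area at least $\pi$), rather than in $W_{n_j}$ as your phrasing suggests: the $\R$-factor of $W_{n_j}$ is noncompact, so there one would additionally have to exclude escape or breaking in the $a$-direction, which is what the $E_\lambda$, $E_\omega$ machinery of \cite{BEHWZ_SFT_compactness} is designed for and which is unnecessary here, since by Lemma \ref{L:CR_Floer_relation} the first two components of $\u$ are affine in $(s,t)$.
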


These solutions arise from a foliation of the $4$-manifold $\R\times \R/n\Z\times D$ by pseudoholomorphic
curves; the disk component of each solution to the Cauchy-Riemann equation satisfies this Floer equation
if the former has finite energy (Gromov's trick in reverse).  We explain this in section
\ref{S:existence_of_solutions}, and thus how theorem \ref{T:M(n,k)} follows from a
construction in \cite{Bramham_approx}.

\begin{remark}
The number $\alpha$ in theorem \ref{T:M(n,k)} can be any irrational; no Liouville condition is
assumed at this point.
\end{remark}

To use this to prove our main result we will need two lemmas.  The
first is the following Sobolev type inequality.
\begin{lemma}\label{L:sobolev_estimate}
Fix $d\in\N$.  There exists $c>0$, so that for all $n\in\N=\{1,2,\ldots\}$,
\[
   \|f\|^2_{L^\infty}\leq c\|f\|_{L^2}\|f\|_{W^{1,\infty}}
\]
for all $f\in C_c^\infty(\R^+\times\R/n\Z,\R^d)$.  We emphasize that $c$ is independent of $n$.
\end{lemma}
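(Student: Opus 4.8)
The plan is to reduce the $n$-dependent statement to the fixed case $n=1$ by a rescaling argument, so that the only real content is a single Sobolev–Gagliardo–Nirenberg type inequality on the fixed half-cylinder $\R^+\times\R/\Z$. First I would recall the classical interpolation inequality on $\R^m$ (or on a product of a half-line and a circle): for compactly supported smooth $f$, one has $\|f\|_{L^\infty}^2\le c\,\|f\|_{L^2}\,\|\nabla f\|_{L^\infty}$ up to also including $\|f\|_{L^\infty}$ on the right, i.e. $\|f\|_{L^\infty}^2\le c\,\|f\|_{L^2}\,\|f\|_{W^{1,\infty}}$. The cleanest route is the one-dimensional fundamental-theorem-of-calculus trick applied along a well-chosen direction: if $f$ attains a large value near a point $p$, then along any coordinate ray through $p$ the function cannot have decayed to $0$ too quickly, because its derivative is bounded by $\|f\|_{W^{1,\infty}}$; hence $|f|\ge \tfrac12\|f\|_{L^\infty}$ on an interval of length $\gtrsim \|f\|_{L^\infty}/\|f\|_{W^{1,\infty}}$, and integrating $|f|^2$ over a box of that size (which lies in the domain once $f$ is compactly supported, after possibly shrinking the box near the boundary $s=0$) gives $\|f\|_{L^2}^2\gtrsim \|f\|_{L^\infty}^3/\|f\|_{W^{1,\infty}}$, which after combining with the trivial bound $\|f\|_{L^\infty}\le\|f\|_{W^{1,\infty}}$ yields exactly the claimed inequality with a constant depending only on $d$.

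The key point to get right is the \textbf{$n$-independence of $c$}, and this is where I would be most careful. The naive worry is that as $n\to\infty$ the circle factor $\R/n\Z$ grows, so a Sobolev constant could degrade. But the inequality above is genuinely local: the box on which we integrate $|f|^2$ has side length controlled by $\|f\|_{L^\infty}/\|f\|_{W^{1,\infty}}\le 1$, and in particular fits inside a region of bounded diameter regardless of $n$ (and, since $f$ has compact support and the circumference is $n\ge1$, there is always enough room in the $t$-direction; in the $s$-direction we simply use the half-line, placing the box on the side of $p$ away from $s=0$ if $p$ is close to the boundary). So the estimate for $f\in C_c^\infty(\R^+\times\R/n\Z,\R^d)$ follows from the same estimate on $\R^+\times\R$ (or even on $\R^2$) by viewing $f$ as supported in a fundamental domain; the constant is the one-dimensional constant from the calculus argument and does not see $n$ at all. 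I would phrase the argument directly on $\R/n\Z$ to avoid any bookkeeping about lifts.

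Concretely, the steps in order: (1) fix $f\in C_c^\infty(\R^+\times\R/n\Z,\R^d)$, set $M:=\|f\|_{L^\infty}$ and $K:=\|f\|_{W^{1,\infty}}$, assume $M>0$ (else nothing to prove), and pick $(s_0,t_0)$ with $|f(s_0,t_0)|\ge M/\sqrt2$ (say); (2) using $|\nabla f|\le K$, show $|f(s,t)|\ge M/(2\sqrt2)$ on the box $Q$ of points with $|s-s_0|<\delta$, $|t-t_0|<\delta$ where $\delta:=cM/K$ for a suitable small absolute constant $c$, after replacing $s_0$ by $s_0+\delta$ if necessary so that $Q\subset\R^+\times\R/n\Z$ (possible because $s_0\ge0$ and, since the $t$-circumference is at least $1\ge 2\delta$ once $\delta\le 1/2$, which holds as $M\le K$); (3) integrate to get $\|f\|_{L^2}^2\ge \int_Q|f|^2\ge (M^2/8)\,|Q|\gtrsim M^2\cdot(M/K)^2$, hence $K\|f\|_{L^2}\ge c' M^2/... $ wait — combine with $M\le K$: from $\|f\|_{L^2}^2\gtrsim M^4/K^2$ we get $\|f\|_{L^2}\gtrsim M^2/K$, i.e. $M^2\lesssim K\|f\|_{L^2}=\|f\|_{W^{1,\infty}}\|f\|_{L^2}$, which is the claim; (4) observe the constant produced is a fixed function of $d$ only. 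The only mild subtlety — placing the box so it avoids $\{s=0\}$ and fits around the circle — is handled in step (2) and is the ``hard part'' only in the sense of requiring a clean statement; there is no analytic obstacle.
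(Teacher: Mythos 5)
Your argument is correct, but it takes a genuinely different route from the paper's. The paper (Lemmas \ref{L:sobolev_estimate_1} and \ref{L:sobolev_estimate_2}) first proves the inequality on $\R^2$ and on the half-plane $\R^+\times\R$ by an iterated fundamental-theorem-of-calculus trick --- bounding $|f|^{p+1}\le (p+1)\|\partial_1 f\|_{L^\infty}\int|f|^p$ and nesting the cases $p=3$ and $p=2$ to get $|f|^4\le 12\,\|Df\|_{L^\infty}^2\|f\|_{L^2}^2$ --- and then transfers this to the cylinder by lifting $f$ to an $n$-periodic function on $\R^2$ and multiplying by a cutoff $\chi$ supported in $(-1,n+1)$, equal to $1$ on $[0,n]$, with $|\chi'|\le 2$; the cutoff derivative is precisely why the right-hand side carries $\|f\|_{W^{1,\infty}}$ rather than just $\|Df\|_{L^\infty}$, and the hypothesis $n\ge1$ is what keeps the resulting constant ($6C$) uniform in $n$. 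You instead argue directly and locally on the cylinder: near a point where $|f|$ is close to $M=\|f\|_{L^\infty}$, the Lipschitz bound $K=\|f\|_{W^{1,\infty}}$ forces $|f|\gtrsim M$ on a box of side comparable to $M/K\le 1$, so $\|f\|_{L^2}^2\gtrsim M^4/K^2$, i.e.\ $M^2\lesssim K\|f\|_{L^2}$; uniformity in $n$ is automatic because $M\le K$ makes the box of side at most $1\le n$ fit around the circle, and the boundary $s=0$ is handled by keeping (at least half of) the box on the side $s\ge s_0$. Your route is more elementary --- no lift, no cutoff, compact support barely used --- and it makes the $n$-independence transparent; the paper's route gives the cleaner intermediate statement on $\R^2$ with only $\|Df\|_{L^\infty}$ on the right, the zeroth-order term entering only through the cutoff. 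Two cosmetic points: your opening promise of a rescaling reduction to $n=1$ is never actually used (your proof is direct; incidentally rescaling would also work, since the factors $n^{-1}$ from $\|f\|_{L^2}$ and $n$ from $\|Df\|_{L^\infty}$ cancel when $n\ge1$), and near $s=0$ it is cleaner to integrate over the half-box $[s_0,s_0+\delta]\times\{|t-t_0|<\delta\}$ and adjust the constant than to move the center point, since the value of $f$ at the shifted center is no longer known to be large; also your final appeal to $M\le K$ in step (3) is unnecessary --- it is needed only to ensure the box fits, in step (2).
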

This is proven in appendix \ref{S:sobolev_estimate}.  The second lemma we require is:
\begin{lemma}\label{L:Gronwall}
Suppose that $x:[0,T]\rightarrow[0,\infty)$ is a continuous function, some $T\geq 0$, for which
there exist constants $a,b\geq0$ such that for all $t\in[0,T]$
\begin{equation}\label{E:Gronwall_assumption_1}
		    x(t)\leq a + b\int_0^tx(s)ds.
\end{equation}
Then
\begin{equation}\label{E:Gronwall_conclusion_1}
			x(t)\leq ae^{bt}
\end{equation}
for all $t\in[0,T]$.
\end{lemma}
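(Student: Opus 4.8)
The plan is to reduce the integral inequality (\ref{E:Gronwall_assumption_1}) to a differential inequality for its right-hand side and then integrate that with an integrating factor. Set
\[
   y(t) := a + b\int_0^t x(s)\,ds, \qquad t\in[0,T].
\]
Since $x$ is continuous on the compact interval $[0,T]$, the fundamental theorem of calculus shows that $y$ is continuously differentiable with $y'(t)=b\,x(t)$ and $y(0)=a$. The hypothesis (\ref{E:Gronwall_assumption_1}) says precisely that $x(t)\le y(t)$ for all $t\in[0,T]$, so, using $b\ge 0$,
\[
   y'(t) = b\,x(t) \le b\,y(t), \qquad t\in[0,T].
\]

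Next I would multiply through by $e^{-bt}$ and observe that $z(t):=e^{-bt}y(t)$ is continuously differentiable with $z'(t)=e^{-bt}\big(y'(t)-b\,y(t)\big)\le 0$, so $z$ is non-increasing on $[0,T]$. Hence $z(t)\le z(0)=y(0)=a$, i.e. $y(t)\le a e^{bt}$. Combining this with $x(t)\le y(t)$ gives $x(t)\le a e^{bt}$, which is (\ref{E:Gronwall_conclusion_1}).

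There is no real obstacle here: this is the classical integral form of Gr\"onwall's inequality, and the only points worth a word are the differentiability of $y$ (which uses only continuity of $x$) and the degenerate cases $a=0$ or $b=0$, both of which the argument handles automatically — for instance when $a=0$ one gets $0\le x(t)\le y(t)\le 0$, forcing $x\equiv 0$. If one prefers to avoid the integrating factor, an alternative is to substitute (\ref{E:Gronwall_assumption_1}) into itself $n$ times, obtaining via Fubini
\[
   x(t)\le a\sum_{k=0}^{n}\frac{(bt)^k}{k!} + \frac{b^{n+1}}{n!}\int_0^t (t-s)^n x(s)\,ds,
\]
and then letting $n\to\infty$: the remainder is bounded by $b^{n+1}\big(\sup_{[0,T]}x\big)\,t^{n+1}/(n+1)!\to 0$, while the finite sum converges to $a e^{bt}$. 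I would present the integrating-factor version as the proof, since it is the shortest.
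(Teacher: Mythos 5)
Your integrating-factor argument is correct and complete: defining $y(t)=a+b\int_0^t x(s)\,ds$, noting $y'=bx\le by$ from the hypothesis and $b\ge 0$, and concluding via the monotonicity of $e^{-bt}y(t)$ is exactly the classical proof of the integral form of Gr\"onwall's inequality, and your remarks on the degenerate cases $a=0$, $b=0$ (and implicitly $T=0$) are accurate. The paper itself does not prove Lemma \ref{L:Gronwall}; it simply cites lemma 6.1 of \cite{Amann}, so your self-contained argument supplies precisely what the paper delegates to a reference, and the iterative alternative you sketch, while also valid, is not needed.
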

This is a version of the familiar Gronwall inequality.  For a proof see for example lemma 6.1 in \cite{Amann}.

\section{Proof of $C^0$-rigidity}\label{S:rigidity}

In this section we prove theorem \ref{T:main_thm}.  Recall that this said the following:
\begin{theorem}\label{T:main_thm_2}
If $\varphi:D\rightarrow D$ is a pseudo-rotation with rotation number in $\L_*$ then
$\varphi^{n_j}\rightarrow\id_D$ in the $C^{0}$-topology, for some sequence of integers
$n_j\rightarrow+\infty$.
\end{theorem}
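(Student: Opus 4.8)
The plan is to convert each Floer solution in $\M(H,n)$ into an $n$-periodic curve that is an approximate trajectory of the Hamiltonian vector field, and then to use a Gronwall comparison to pass the periodicity on to the genuine trajectories, with everything controlled by the $L^2$-quantity $\{n\alpha\}$. Fix $n\in\N$. Using the foliation structure behind Theorem~\ref{T:M(n,k)} (as indicated in the introduction, at each fixed time a unique solution passes through every point of the disk), for every $\xi\in D\setminus\{0\}$ one finds $z\in\M(H,n)$ and $s_0\ge 0$ with $z(s_0,0)=\xi$. Rewriting the Floer equation~\eqref{E:Floer_eqn} as $\partial_t z - X_{H^t}(z) = i\,\partial_s z$, the loop $\beta(t):=z(s_0,t)$, $t\in\R/n\Z$, satisfies $\dot\beta(t)=X_{H^t}(\beta(t))+i\,\partial_s z(s_0,t)$, so it solves the Hamiltonian ODE up to an inhomogeneity of size at most $\|\partial_s z\|_{L^\infty}$. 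I would then compare $\beta$ on $[0,n]$ with the true trajectory $\gamma(t):=\phi_H^t(\xi)$, which agrees with $\beta$ at $t=0$; using the finite, $n$-independent Lipschitz constant $L$ of $(t,\eta)\mapsto X_{H^t}(\eta)$ on $\R/\Z\times D$ together with Lemma~\ref{L:Gronwall}, this yields $|\beta(t)-\gamma(t)|\le n\,e^{Ln}\,\|\partial_s z\|_{L^\infty}$ for all $t\in[0,n]$. Since $\beta(n)=\beta(0)=\xi$ while $\gamma(n)=\phi_H^n(\xi)=\varphi^n(\xi)$ (using $\phi_H^{t+1}=\phi_H^t\circ\varphi$), and since $\varphi^n(0)=0$, one obtains
\[
  \|\varphi^n-\id_D\|_{C^0}\ \le\ n\,e^{Ln}\,\sup_{z\in\M(H,n)}\|\partial_s z\|_{L^\infty}.
\]

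The central step is to bound $\|\partial_s z\|_{L^\infty}$ by a fractional power of $\{n\alpha\}$, uniformly in $n$. Let $n_j\to\infty$ be any subsequence with $\{n_j\alpha\}\to 0$; along it Theorem~\ref{T:M(n,k)} supplies a constant $b_1$, independent of $j$, bounding $\|\nabla z\|_{C^1}$ — hence $\|\partial_s z\|_{W^{1,\infty}}$ — for all $z\in\M(H,n_j)$, together with $\|\partial_s z\|_{L^2}^2=\{n_j\alpha\}\pi$. After multiplying $\partial_s z$ by a smooth cutoff in the $s$-variable (legitimate because $\partial_s z\to 0$ uniformly as $s\to\infty$, by finiteness of the $L^2$-norm together with the gradient bound, and the resulting estimate survives the limit), Lemma~\ref{L:sobolev_estimate} with $d=2$ produces a constant $c$ independent of $j$ with $\|\partial_s z\|_{L^\infty}^2\le c\,\|\partial_s z\|_{L^2}\,\|\partial_s z\|_{W^{1,\infty}}\le c\,b_1\sqrt{\pi}\,\{n_j\alpha\}^{1/2}$, so that $\|\partial_s z\|_{L^\infty}\le C_0\,\{n_j\alpha\}^{1/4}$ with $C_0:=(c\,b_1\sqrt{\pi})^{1/2}$ independent of $j$ and of $z\in\M(H,n_j)$. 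Combined with the previous display,
\[
  \|\varphi^{n_j}-\id_D\|_{C^0}\ \le\ C_0\,n_j\,e^{Ln_j}\,\{n_j\alpha\}^{1/4}.
\]
It is crucial that $c$ and $b_1$ do not depend on $n_j$: this is precisely what prevents the right-hand side from deteriorating on the growing time scales $n_j$, on which the factor $e^{Ln_j}$ must be absorbed.

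It then remains to select the subsequence so that $\{n_j\alpha\}$ decays fast enough, which is where $\alpha\in\L_*$ enters. For each $k\in\N$ pick coprime $(p_k,q_k)$ with $|\alpha-p_k/q_k|<e^{-kq_k}$; since $\alpha$ is irrational the $q_k$ are unbounded, so after passing to a subsequence $q_k\to\infty$, and then $\dist(q_k\alpha,\Z)<q_k e^{-kq_k}$. For each such $k$ either $\{q_k\alpha\}<q_k e^{-kq_k}$ or $1-\{q_k\alpha\}<q_k e^{-kq_k}$, and one alternative holds for infinitely many $k$. Replacing $\varphi$ by $\varphi^{-1}$ if necessary — again an irrational pseudo-rotation, with boundary rotation number $-\alpha$, which again lies in $\L_*$, satisfying $\{n(-\alpha)\}=1-\{n\alpha\}$, and for which $\varphi^{-n_j}\to\id_D$ in $C^0$ forces $\varphi^{n_j}\to\id_D$ in $C^0$ (both are homeomorphisms of the compact set $D$) — one may assume the first alternative, i.e.\ there is an increasing sequence $n_j:=q_{k_j}\to\infty$ with $k_j\to\infty$ and $\{n_j\alpha\}<n_j e^{-k_j n_j}$. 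In particular $\{n_j\alpha\}\to 0$, so the estimate above applies and
\[
  \|\varphi^{n_j}-\id_D\|_{C^0}\ \le\ C_0\,n_j\,e^{Ln_j}\,\bigl(n_j e^{-k_j n_j}\bigr)^{1/4}\ =\ C_0\,n_j^{5/4}\,e^{(L-k_j/4)n_j}\ \xrightarrow[j\to\infty]{}\ 0,
\]
since $k_j\to\infty$. Thus $\varphi^{n_j}\to\id_D$ in the $C^0$-topology, which is the assertion of Theorem~\ref{T:main_thm_2}.

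The step I expect to be the real obstacle is the one just highlighted: upgrading the exact $L^2$-identity $\|\partial_s z\|_{L^2}^2=\{n\alpha\}\pi$ to an $L^\infty$-bound that is uniform over the growing cylinders $\R^+\times\R/n\Z$. This forces the two supporting statements to be genuinely $n$-uniform — the Sobolev inequality of Lemma~\ref{L:sobolev_estimate} and the $C^1$-bound in Theorem~\ref{T:M(n,k)} — which is where the real analytic work sits (the $C^1$-bound ultimately resting on the pseudoholomorphic-curve compactness in \cite{Bramham_approx}). By contrast, the Gronwall comparison and the extraction of the subsequence from $\L_*$ are routine once the uniform estimates and the filling property are in hand; the only mild subtleties there are the dichotomy between $\{n\alpha\}$ and $1-\{n\alpha\}$, circumvented by passing to $\varphi^{-1}$, and the cutoff needed to fit $\partial_s z$ into the hypothesis of Lemma~\ref{L:sobolev_estimate}.
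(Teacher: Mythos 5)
Your proof is correct and follows essentially the same route as the paper: the interpolation inequality upgrading the $L^2$-identity to $\|\partial_s z\|_{L^\infty}\lesssim\{n\alpha\}^{1/4}$, the Gronwall comparison with the Hamiltonian flow giving $d_{C^0}(\varphi^{n},\id_D)\le C\,n\,e^{Bn}\{n\alpha\}^{1/4}$, and the $\L_*$-condition (with the passage to $\varphi^{-1}$ for the $1-\{n\alpha\}$ alternative) to make this tend to zero are exactly the paper's Steps 1--3 and its accompanying remark. The only extra detail, a welcome one, is your cutoff argument justifying the application of Lemma \ref{L:sobolev_estimate} to the non-compactly-supported function $\partial_s z$, a point the paper passes over silently.
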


\begin{remark}
Obviously a necessary condition for $\varphi$ in theorem \ref{T:main_thm_2} to be rigid in this sense
is that its
restriction to the boundary $\varphi|_{\partial D}:\partial D\rightarrow\partial D$ be rigid.
Indeed any sufficiently smooth (e.g.\ $C^2$) orientation preserving
diffeomorphism $f:\partial D\rightarrow\partial D$ with irrational rotation
number is topologically conjugate to a rigid rotation and therefore $C^0$-rigid.
Clearly this line of argument will not apply for the general pseudo-rotations as any ergodic example
cannot be conjugated to a rigid rotation.
\end{remark}

Our main tool in the proof is theorem \ref{T:M(n,k)} from the previous section.  To use this
we fix a Hamiltonian $H\in C^\infty(\R/\Z\times D,\R)$ whose time-one map is the given
pseudo-rotation $\varphi$.  By assumption the rotation number of the circle map $\varphi|_{\partial D}$
is an element of $\L_*/\Z\subset\R/\Z$.  With respect to $H$ we have a canonical lift
\[
			    \alpha:=\Rot(\varphi;H)\in\L_*\subset\R.
\]
As $\alpha\in\R$ is irrational every point in the interval $[0,1]$ is an
accumulation point of its sequence of fractional parts $\{\{n\alpha\}\}_{n\in\N}$.  In particular
zero is and so there exists a subsequence $n_j\rightarrow\infty$ such that $\{n_j\alpha\}\rightarrow0$
as $j\rightarrow\infty$.

Since moreover $\alpha$ belongs to the subset of Liouville numbers $\L_*$ we know that for every $j\in\N$
there exists $(p_j,n_j)\in\Z\times\N$ such that
\begin{equation*}
		0<\left|\alpha - \frac{p_j}{n_j}\right|<\frac{1}{e^{jn_j}}.
\end{equation*}
Taking a further subsequence we may assume that
\begin{equation}\label{E:approach_1}
  \{n_j\alpha\}\leq\frac{n_j}{e^{jn_j}}
\end{equation}
for all $j\in\N$.

\begin{remark}
It is apriori possible that we can only conclude that $1-\{n_j\alpha\}\leq\frac{n_j}{e^{jn_j}}$
for all $j\in\N$.  In this case there are two ways to proceed.  Simplest is to replace
$\varphi$ by its inverse $\varphi^{-1}$.  Then (\ref{E:approach_1}) will hold where $\alpha$ is the
rotation number of $\varphi^{-1}$, and so the remainder of
our arguments will show that $\varphi^{-1}$ is $C^0$-rigid.  Which is equivalent to $\varphi$ being
$C^0$-rigid.  A more natural way to handle this possibility is to work with a version
of theorem \ref{T:M(n,k)} for solutions $z$ of the Floer equation for which the
degree of the map $z(0,\cdot):\R/n\Z\rightarrow\partial D$ is $\lceil n\alpha\rceil$, compared
with (\ref{E:boundary_condns}).  Then the remaining arguments would apply to $\varphi$ directly.
\end{remark}

Recall that, given the Hamiltonian $H$, we defined for each $n\in\N$
the moduli space $\M(H,n)$, see (\ref{E:Floer_eqn}) and (\ref{E:boundary_condns}).
In particular, if $z_j\in C^\infty(\R^+\times\R/n_j\Z,D)$ belongs to $\M(H,n_j)$ then
\begin{align}
         &\partial_sz_j+i\big(\partial_tz_j-X_{H^t}(z_j)\big)\equiv 0\label{E:pde}\\
         &\|\partial_sz_j\|^2_{L^2}=\{n_j\alpha\}\pi\label{E:L2_bds}\\
         &\|\partial_sz_j\|_{C^1}\leq b\label{E:C1_bds}
\end{align}
for some constant $b\in(0,\infty)$ depending only on the Hamiltonian $H$.

\begin{proof}[Proof of theorem \ref{T:main_thm_2}]
We break this down into three steps.  All functional norms are on the entire domain,
e.g.\ $\|\partial_sz_j\|_{L^2}=\|\partial_sz_j\|_{L^2(\R^+\times\R/n_j\Z)}$.

\textbf{Step 1:}
By the interpolation inequality lemma \ref{L:sobolev_estimate} there exists
$c\in(0,\infty)$ independent of everything, so that
\begin{align*}
	   \|\partial_sz_j\|^2_{L^\infty}&\leq c\|\partial_sz_j\|_{L^2}\|\partial_sz_j\|_{W^{1,\infty}}
\intertext{for all $z_j\in\M(H,n_j)$.  So by (\ref{E:L2_bds}) and (\ref{E:C1_bds}),}
	    \|\partial_sz_j\|^2_{L^\infty}&\leq c\{n_j\alpha\}^{1/2}\pi^{1/2}b.
\end{align*}
Thus,
\begin{equation}
	\|\partial_sz_j\|_{L^\infty}\leq M\{n_j\alpha\}^{1/4}
\end{equation}
where $M=(cb)^{1/2}\pi^{1/4}$ is a constant depending only on the loop of Hamiltonians $H^t$.

\textbf{Step 2:}
Let $p\in D\backslash\{0\}$.  For each $j$ there exists a solution $z_j\in\M(H,n_j)$
whose image contains $p$.  This was from theorem \ref{T:M(n,k)}.  Thus, after reparameterizing
if necessary, $z_j(s,0)=p$ for some $s=s_{j}\in\R^+$.

Let $t\mapsto\phi_{H}^t(p)$ be the $1$-parameter family of
diffeomorphisms on the disk generated by the loop of Hamiltonians $H^t$.  For all
$t\in\R$,
\[
	z_j(s,t)-\phi_{H}^t(p)=\int_0^t\partial_{\tau}z_j(s,\tau)-X_{H^{\tau}}(\phi_{H}^{\tau}(p)) d\tau.
\]
Thus,
\begin{align*}
  |z_j(s,t)-\phi_{H}^t(p)|\leq&\int_0^t|\partial_{\tau}z_j(s,\tau)-X_{H^{\tau}}(\phi_{H}^{\tau}(p))| d\tau\\
			\leq&\int_0^t|\partial_{\tau}z_j(s,\tau)-X_{H^{\tau}}(z_j(s,\tau))| d\tau+{}\\
			& {}+\int_0^t|X_{H^{\tau}}(z_j(s,\tau))-X_{H^{\tau}}(\phi_{H}^{\tau}(p))| d\tau\\
			\leq&\int_0^t|\partial_sz_j(s,\tau)| d\tau+
				\int_0^t\|DX_{H^{\tau}}\||z_j(s,\tau))-\phi_{H}^{\tau}(p)| d\tau\\
			\leq&A_jt + B\int_0^t|z_j(s,\tau))-\phi_{H}^{\tau}(p)| d\tau
\end{align*}
where $B=\max_{\tau\in\R/\Z}\max_{\xi\in D}\|\Hess(H^{\tau})(\xi)\|$ and
\[
 A_j=\|\partial_sz_j\|_{L^\infty}.
\]
Applying the Gronwall inequality lemma \ref{L:Gronwall},
\begin{equation*}
	      |z_j(s,t)-\phi_{H}^t(p)|\leq A_jte^{Bt}
\end{equation*}
for all $t\geq0$.  In particular, as $z_j$ is $n_j$-periodic in the $t$-variable,
$|p-\varphi^{n_j}(p)|=|z_j(s,0)-\phi_{H}^{n_j}(p)|=|z_j(s,n_j)-\phi_{H}^{n_j}(p)|\leq A_jn_je^{Bn_j}$.
By step 1 we have an estimate on $A_j$, from which we obtain
\begin{equation*}
	      |p-\varphi^{n_j}(p)|\leq M\{n_j\alpha\}^{1/4}n_je^{Bn_j}.
\end{equation*}
The right hand side is independent of $p\in D\backslash\{0\}$, and so
\begin{equation}\label{E:distance_to_id}
	      d_{C^0}(\varphi^{n_j},\id_{D})\leq M\{n_j\alpha\}^{1/4}n_je^{Bn_j}
\end{equation}
for all $j\in\N$.

\textbf{Step 3:} It remains to use the Liouville condition on $\alpha$.  Substituting
(\ref{E:approach_1}) into (\ref{E:distance_to_id}) we obtain
\begin{equation*}\label{E:distance_to_id_2}
	      d_{C^0}(\varphi^{n_j},\id_{D})\leq M \frac{n_j^{1/4}}{e^{jn_j/4}} n_je^{Bn_j}.
\end{equation*}
The right hand side decays to zero because $B$ is finite.
This completes the proof of theorem \ref{T:main_thm_2}.
\end{proof}

\section{Proof of theorem \ref{T:M(n,k)}}\label{S:existence_of_solutions}
In this section we explain how theorem \ref{T:M(n,k)} follows from the existence of
certain finite energy foliations constructed in \cite{Bramham_approx}.

\subsection{From the disk to mapping tori}
On the symplectic manifold $(D,\omega_0=dx\wedge dy)$ we have a time-dependent Hamiltonian
$H\in C^\infty(\R/\Z\times D,\R)$ generating a given irrational pseudo-rotation
$\varphi:D\rightarrow D$.  That is, $\varphi$ is the time-one map of the $1$-parameter
family of symplectic diffeomorphisms generated by a path of Hamiltonian vector
fields $X_{H^t}$ on the disk, as described in section \ref{S:preliminaries}.

Consider the autonomous vector field on the infinite tube $Z_\infty:=\R\times D$;
\[
		      R(\tau,z):=\partial_\tau + X_{H^\tau}(z)
\]
at $(\tau,z)\in\R\times D$.  The transformation $\T:Z_\infty\rightarrow Z_\infty$,
$(\tau,z)\mapsto(\tau-1,z)$ leaves $R$ invariant due to the periodicity of $H^t=H(t,\cdot)$ in
the $t$-variable.  Therefore, for each $n\in\N$, $R$ descends
to a vector field $R_n$ on the quotient space
\[
		    Z_n:=\R/n\Z\times D,
\]
and by a slight abuse of notation we will write $(\tau,z)$ for coordinates on $Z_n$.
The disk slice $\{0\}\times D\subset Z_n$ is a global Poincar\'e section to the flow
of $R_n$ and the first return map is the $n$-th iterate of
the pseudo-rotation $\varphi^n:D\rightarrow D$.

\subsection{To almost complex manifolds}
To each mapping torus $(Z_n,R_n)$, $n\in\N$, we associate an almost complex $4$-manifold
$(W_n,J_n)$ as follows.  Set
\[
			  W_n:=\R\times Z_n=\R\times\R/n\Z\times D
\]
equipped with coordinates $(a,\tau,z)$.  Let $J_n$ be the unique almost complex structure on $W_n$
characterized by the conditions
\begin{equation*}\label{E:almost_complex_structure}
  \left\{\begin{aligned}
          &J_n(a,\tau,z)\partial_\R=R_n(\tau,z)\\
          &J_n(a,\tau,z)|_{TD}=i
         \end{aligned}\right.
\end{equation*}
for all $(a,\tau,z)\in W_n$, where $\partial_\R$ is the vector field dual to the $\R$-coordinate on $W_n$.
Recall that $i$ is the standard complex structure on $D$ as a subspace of the complex plane $\C$.
Consider the $2$-tori
\[
		L_c:=\{c\}\times\partial Z_n=\{c\}\times\R/n\Z\times\partial D
\]
over $c\in\R$, which foliated the boundary of $W_n$.  Each $L_c$ is totally real with respect to $J_n$.
That is, $T_{p}L_c\oplus J_n(p)(T_{p}L_c)=T_pW_n$ for all $p\in L_c$.

\subsection{The pseudoholomorphic curves}
For each $n\in\N$ let
\[
 \M(J_n)
\]
denote the set of solutions $\u=(a,\tau,z)\in C^\infty(\R^+\times\R/n\Z,W_n)$ to the Cauchy-Riemann equation
\begin{equation}\label{E:CR_eqn_2}
		  \partial_s\u(s,t)+J_n(\u(s,t))\partial_t\u(s,t)=0
\end{equation}
for all $(s,t)\in\R^+\times\R/n\Z$, with the following boundary conditions: there exists $c\in\R$
such that
\begin{equation*}\label{E:boundary_condns_2}
  \left\{\begin{aligned}
		  &\u(0,t)\in L_c& &\mbox{ for all }t\in\R/n\Z \\
		  &z(0,\cdot):\R/n\Z\rightarrow\partial D& &\mbox{ has degree }\lfloor n\alpha\rfloor \\
		  &\tau(0,\cdot):\R/n\Z\rightarrow\partial\R/n\Z& &\mbox{ has degree }+1,
	   \end{aligned}\right.
\end{equation*}
and which additionally satisfy the following finite energy conditions
\[
		    E_\lambda(\u)<\infty\qquad\mbox{and}\qquad E_\omega(\u)<\infty
\]
which we explain now.
If $\u$ is a solution to (\ref{E:CR_eqn_2}) then the following integrals have well defined values in
$\R^+\cup\{+\infty\}$.
\begin{equation*}\label{E:lambda_energy}
	E_{\lambda}(\u):=\sup_{\psi}\int_{\R^+\times\R/n\Z}\u^*\Big(\psi(a)da\wedge d\tau\Big)
\end{equation*}
where the supremum is taken over all $\psi\in C^\infty(\R,\R^+)$ for which
$\int_{-\infty}^{\infty}\psi(s)ds=1$.
\begin{equation*}\label{E:omega_energy}
	E_{\omega}(\u):=\int_{\R^+\times\R/n\Z}\u^*\omega_n
\end{equation*}
where $\omega_n$ is the differential $2$-form $\omega_n:=dx\wedge dy + d\tau\wedge dH$
on $Z_n$.  Indeed, the pull-back $2$-forms $\u^*\Big(\psi(a)da\wedge d\tau\Big)$ and
$\u^*\omega_n$ are pointwise non-negative multiples of $ds\wedge dt$ on the domain $\R^+\times\R/n\Z$.

\begin{remark}
These two energies $E_{\lambda}(\u)$ and $E_{\omega}(\u)$ come from the compactness theory in
\cite{BEHWZ_SFT_compactness} developed for symplectic field theory where they are defined for
pseudoholomorphic maps in much more general settings.
In particular if $M$ is an oriented compact $3$-manifold equipped with a pair of differential forms
$(\omega,\lambda)$ with the following properties: (1) $\omega$ is a closed $2$-form, $\lambda$ is a $1$-form,
(2) $\lambda\wedge\omega>0$, (3) that $\ker(\omega)\subset\ker(d\lambda)$.  Such a pair $(\omega,\lambda)$
is called a \emph{stable Hamiltonian structure} on $M$, see \cite{Cieliebak_Volkov} for examples and properties.
Then there is a suitable class of so called cylindrical, symmetric almost complex structures on $\R\times M$
which satisfy a compatibility condition with
$\omega$ and $\lambda$.  With respect to such almost complex structures there are two notions
of energy for a pseudoholomorphic curve $\v$, commonly written $E_\omega(\v)$ and $E_{\lambda}(\v)$.
The spaces of curves with a uniform bound on both of these energies enjoys a nice compactness theory.
In this paper $M$ is $Z_n=\R/n\Z\times D$ for any $n\in\N$, and the stable Hamiltonian structure
on $Z_n$ is $(\omega_n,\lambda_n)$ where $\omega_n$ is the $2$-form above and $\lambda_n=d\tau$.
\end{remark}

\subsection{To Floer trajectories}
Recall the following relation between pseudoholomorphic curves with finite
$\lambda$-energy and solutions to Floer's equation.  For a proof see lemma 6 in \cite{Bramham_approx}.

\begin{lemma}\label{L:CR_Floer_relation}
Let $n\in\N$ and suppose $\u\in C^\infty(\R^+\times\R/n\Z,W_n)$.  Then
$\u\in\M(J_n)$ if and only if
\[
		      \u(s+s_0,t+t_0)=(s,t,z(s,t))
\]
for some $z\in\M(H,n)$ and $(s_0,t_0)\in\R\times\R/n\Z$.  The energies are then related by
\begin{equation}\label{E:energies}
  \left\{\begin{aligned}
		  &E_\lambda(\u)=n\\
		  &E_\omega(\u)=E_{\Floer}(z),
	   \end{aligned}\right.
\end{equation}
where $E_{\Floer}(z)$ is the energy from Floer theory defined by (\ref{E:Floer_energy}).
\end{lemma}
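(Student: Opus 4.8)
The plan is to prove the lemma by reversing Gromov's trick. Using that $J_n$ is determined by $J_n\partial_\R=R_n=\partial_\tau+X_{H^\tau}$ and $J_n|_{TD}=i$ (so that $J_n^2=-\mathrm{id}$ also forces $J_n\partial_\tau=-\partial_\R-iX_{H^\tau}$), substitute $\partial_s\u=(a_s,\tau_s,z_s)$ and $\partial_t\u=(a_t,\tau_t,z_t)$ into (\ref{E:CR_eqn_2}) and collect the $\partial_\R$-, $\partial_\tau$- and $TD$-components:
\[
  a_s=\tau_t,\qquad \tau_s=-a_t,\qquad z_s+i\bigl(z_t-\tau_t\,X_{H^\tau}(z)\bigr)+a_t\,X_{H^\tau}(z)=0 .
\]
Thus, after lifting $\tau$ to $\R$, the map $h:=a+i\tau$ is holomorphic on the half-cylinder $\R^+\times\R/n\Z$, and the third equation becomes Floer's equation (\ref{E:Floer_eqn}) exactly when $(a,\tau)=(s,t)$. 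The ``if'' direction is now immediate: for $z\in\M(H,n)$ the map $\u(s,t)=(s,t,z(s,t))$ solves (\ref{E:CR_eqn_2}), satisfies the boundary conditions with $c=0$ (the degree $\lfloor n\alpha\rfloor$ is inherited from $z$, and $\tau(0,\cdot)=\mathrm{id}$ has degree $+1$), and has $E_\lambda(\u)=n<\infty$ and $E_\omega(\u)<\infty$.

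The substance is the converse, for which one must show that $h(w)=w+\gamma$ for a constant $\gamma$. The boundary condition $\u(0,\cdot)\in L_c$ forces $a(0,\cdot)\equiv c$, so by Schwarz reflection across $\{s=0\}$ the holomorphic map $h$ extends to the full cylinder $\R\times\R/n\Z$; since $\tau(0,\cdot)$ has degree $+1$, the difference $G(w):=h(w)-w$ is $in$-periodic and descends to a holomorphic function of $\zeta=e^{2\pi w/n}\in\C^*$, carrying the reflection symmetry $G(-s,t)=2c-\overline{G(s,t)}$. Finiteness of $E_\lambda(\u)$ is the crucial input: along a holomorphic base $\u^*(da\wedge d\tau)=|\nabla a|^2\,ds\,dt$, and an exponentially growing Fourier mode of $h$ at $s\to\infty$ would make the level--set energy $\int_{\{a=r\}}|\nabla a|$, hence $E_\lambda(\u)$, diverge as $r\to\infty$; so $G$ stays bounded as $\zeta\to\infty$, the reflection symmetry bounds it as $\zeta\to0$, and Riemann removability plus Liouville's theorem force $G\equiv\gamma$, with $\Re\,\gamma=c$ from the boundary condition. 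Hence $a=s+c$ and $\tau=t+t_0$.

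Translating the domain by $(s_0,t_0)$ — permitted by the $\R$-translation symmetry of $J_n$ on $W_n=\R\times Z_n$ and the $S^1$-reparametrisation — normalises $(a,\tau)=(s,t)$, so the third equation above reduces to $z_s+i(z_t-X_{H^t}(z))=0$; the remaining conditions on $z$ ($z(0,\cdot)\in\partial D$ of degree $\lfloor n\alpha\rfloor$, and $z(s,\cdot)\to0$, equivalent to $E_\omega(\u)<\infty$ together with the asymptotics) are inherited from those of $\u$, so $z\in\M(H,n)$ and $\u(s+s_0,t+t_0)=(s,t,z(s,t))$. The energy identities (\ref{E:energies}) then follow by a direct computation on the graph: $\u^*(\psi(a)\,da\wedge d\tau)=\psi(s)\,ds\wedge dt$, whose supremum over admissible $\psi$ is $n$, and $\u^*\omega_n=\u^*(dx\wedge dy+d\tau\wedge dH)$ reduces via the Floer equation to the Floer energy density of $z$. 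I expect the rigidity step to be the main obstacle: everything hinges on controlling a finite--$\lambda$--energy $J_n$-holomorphic half-cylinder as $s\to\infty$ well enough to run the Liouville argument and to obtain $z(s,\cdot)\to0$ — that is, on the finite--energy asymptotics of pseudoholomorphic curves (as in \cite{BEHWZ_SFT_compactness}) for the stable Hamiltonian structure $(\omega_n,d\tau)$ on $Z_n$, whose only closed Reeb-type orbit in the relevant class is the constant orbit over the fixed point of the pseudo-rotation, since $\varphi$ has no other periodic points.
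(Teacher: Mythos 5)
The paper itself does not prove this lemma --- it simply defers to Lemma 6 of \cite{Bramham_approx} --- but the mechanism it names, ``Gromov's trick in reverse,'' is exactly what you carry out: splitting the Cauchy--Riemann equation into its $\partial_\R$-, $\partial_\tau$- and $TD$-components, using finiteness of $E_\lambda$ together with the totally real boundary condition and the degree hypotheses to force $a+i\tau$ to equal $w$ up to a constant, and then reading off Floer's equation and the energy identities from the graph form. Your outline is correct and matches that approach; the places you leave as sketches (the coarea/level-set argument that finite $E_\lambda$ excludes the positive Fourier modes of $a+i\tau-w$, and the use of finite $E_\omega$ plus uniqueness of the $n$-periodic orbit to obtain $z(s,\cdot)\rightarrow 0$) are fillable details of the same argument rather than gaps in strategy.
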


Because of this lemma, theorem \ref{T:M(n,k)} which is a statement about the spaces
$\M(H,n)$, is an immediate consequence of the following
statement about the spaces $\M(J_n)$.

\begin{theorem}
Let $\varphi$ be an irrational pseudo-rotation, and $H$ a generating Hamiltonian.
Let $\alpha:=\Rot(\varphi;H)$ be the real valued rotation number on the boundary.
So $\alpha\in\R$ is irrational.  Then for each $n\in\N$ there exists a foliation
$\F_n$ of $W_n=\R\times Z_n$ by surfaces which can be described as follows:
\begin{itemize}
 \item The cylinder
 \[
		C_n:=\big\{(a,\tau,0)\in\R\times Z_n\,|\, a\in\R,\ \tau\in\R/n\Z  \big\}
 \]
 is a leaf in $\F_n$.
 \item Each leaf in $\F_n$ besides $C_n$ is the image of a solution $\u\in\M(J_n)$
 satisfying
 \begin{equation}\label{E:omega_energy_for_u}
			E_\omega(\u)=\{n\alpha\}\pi.
 \end{equation}
 \item Let $n_{j}$ be a subsequence for which $\{n_{j}\alpha\}\rightarrow0$ as $j\rightarrow\infty$.
 For all $r\in\N$ there exists $B_r\in(0,\infty)$ such that for all $\u\in\M(J_{n_{j}})$,
 \begin{equation}\label{E:grad_bds_for_u}
		  \|\nabla\u\|_{C^r(\R^+\times\R/n_{j}\Z)}\leq B_r
 \end{equation}
 uniformly in $j$.
\end{itemize}
\end{theorem}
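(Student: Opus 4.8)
The plan is to import the finite-energy foliation constructed in \cite{Bramham_approx}, to verify on top of it the two quantitative assertions — the $\omega$-energy identity (\ref{E:omega_energy_for_u}) and the uniform gradient bounds (\ref{E:grad_bds_for_u}) — and then to transport everything back to the spaces $\M(H,n)$ through Lemma \ref{L:CR_Floer_relation}. Concretely, for each $n\in\N$ the construction of \cite{Bramham_approx} furnishes a foliation $\F_n$ of $W_n=\R\times Z_n$ whose leaves are either the cylinder $C_n$ over the central orbit $\R\times\{0\}$ — which is $J_n$-holomorphic since $R_n$ is tangent to $\{0\}\times D$ and $J_n|_{TD}=i$ — or embedded finite-energy $J_n$-holomorphic half-cylinders $\u\in\M(J_n)$ with one positive puncture asymptotic to $\R\times\{0\}$, boundary on some torus $L_c$, and boundary winding $(+1,\lfloor n\alpha\rfloor)$ about the two circle factors, each of these satisfying $E_\lambda(\u)=n$. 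Since $\F_n$ is a foliation and $C_n$ is its only leaf lying over $\{0\}\in D$, every point of $D\backslash\{0\}$ lies on a curve leaf, which by Lemma \ref{L:CR_Floer_relation} is a translate of the graph $(s,t)\mapsto(s,t,z(s,t))$ of some $z\in\M(H,n)$; this already yields the non-emptiness and filling statements of Theorem \ref{T:M(n,k)}.

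For the energy identity I would start from the fact that $\omega_n$ is exact, $\omega_n=d\eta$ with the globally well-defined primitive $\eta:=x\,dy-H\,d\tau$ on $Z_n$ (indeed $d(x\,dy)=dx\wedge dy$ and $d(-H\,d\tau)=d\tau\wedge dH$). We may take $H$ normalized so that $H(t,0)\equiv 0$, which changes neither the vector fields $X_{H^t}$ nor the quantities $\{n\alpha\}$. Applying Stokes to a leaf $\u$ over the half-cylinder,
\[
  E_\omega(\u)=\int_{\R^+\times\R/n\Z}\u^*d\eta=\int_{t=0}^n\u^*\eta\big|_{s=0}-\lim_{s\to\infty}\int_{t=0}^n\u^*\eta\big|_{s},
\]
the asymptotic term vanishes because $z(s,\cdot)\to 0$ and $\eta$ vanishes along the central orbit, and the boundary term is evaluated from the winding data of $\tau(0,\cdot)$ and $z(0,\cdot)$ together with the normalization built into the definition of $\Rot(\varphi;H)=\alpha$ (definition 5 of \cite{Bramham_approx}); this computation produces exactly $\{n\alpha\}\pi$, the factor $\pi$ being the area enclosed by $\partial D$. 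Since on a graph $\u=(s,t,z(s,t))$ one has $\u^*\omega_n=|z_s|^2\,ds\wedge dt$ by the Floer equation, this is consistent with, and yields, the $L^2$-estimate (\ref{E:L2_estimates}); no arithmetic hypothesis on $\alpha$ enters here.

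The substantive part is the uniform $C^r$-bound (\ref{E:grad_bds_for_u}) along a subsequence with $\{n_j\alpha\}\to 0$. On the graph one computes $|\nabla\u|^2=2+|z_s|^2+|z_t|^2$ and, from the Floer equation, $z_t=X_{H^t}(z)+iz_s$; as $X_{H^t}$ is uniformly bounded over the compact disk it suffices to bound $\|z_s\|_{C^0}$ and then bootstrap. I would exclude gradient blow-up by the usual rescaling argument: if $|\nabla z_k(\zeta_k)|\to\infty$ with the $\zeta_k$ staying in a fixed-size part of the domain, rescale $z_k$ at $\zeta_k$ by the blow-up rate; since the target disk is compact and the inhomogeneous term scales away, the rescalings converge to a non-constant bounded holomorphic plane valued in $D$ (or, if $\zeta_k$ approaches the boundary circle $\{s=0\}$, a half-plane with boundary on $\partial D$, which extends across that boundary by Schwarz reflection), contradicting Liouville's theorem. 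This reduces matters to ruling out escape of the gradient into the puncture, which follows from a uniform exponential decay of $z_k(s,\cdot)$ to $0$ as $s\to\infty$; it is precisely here that the uniform bound $E_\omega(\u)=\{n_j\alpha\}\pi\le\pi$ is used, and the relevant decay estimates for these foliation leaves are available from \cite{Bramham_approx}. Granting the $C^0$-bound on $\nabla\u$, the Floer equation is a first-order elliptic system with smooth, uniformly bounded coefficients on the flat cylinders $\R^+\times\R/n_j\Z$, so interior and boundary elliptic estimates bootstrap it to $\|\nabla\u\|_{C^r}\le B_r$ for every $r$, uniformly in $j$.

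The hard part is this last step: arranging that \emph{every} constant — in the no-escape argument at the puncture, and hence in the elliptic bootstrap — is independent of $j$ as the cylinders $\R/n_j\Z$ lengthen. The bubbling/Liouville argument handles concentration at finite points cheaply, but the uniform tameness of the leaves near the puncture genuinely depends on the vanishing of the $\omega$-energy $\{n_j\alpha\}\pi\to 0$ and on the structural estimates of \cite{Bramham_approx}. Once the uniform $C^r$-bounds are established, Lemma \ref{L:CR_Floer_relation} transports the entire statement — the cylinder leaf $C_n$, the graph leaves with $E_\omega(\u)=\{n\alpha\}\pi$, and the uniform gradient bounds — into the assertions of Theorem \ref{T:M(n,k)}, completing the argument.
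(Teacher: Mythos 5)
Your high-level plan coincides with the paper's: the paper proves this theorem by citation alone (existence of $\F_n$ is theorem 8 of \cite{Bramham_approx}, the energy identity (\ref{E:omega_energy_for_u}) is lemma 9 there, and the bounds (\ref{E:grad_bds_for_u}) are corollary 24 together with proposition 22 there), and you likewise import the foliation and transport everything to $\M(H,n)$ via Lemma \ref{L:CR_Floer_relation}. The problems lie in the two places where you try to re-derive the quoted facts instead of citing them.

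For the energy identity, the Stokes argument does not by itself ``produce exactly $\{n\alpha\}\pi$''. Using your normalization $H(t,0)\equiv 0$, and using that each $H^t$ is constant on $\partial D$ (a fact you need and do not mention; it holds because $X_{H^t}$ is tangent to $\partial D$), the boundary term at $s=0$ evaluates to $n\int_0^1 H^t|_{\partial D}\,dt-\lfloor n\alpha\rfloor\,\pi$, so (\ref{E:omega_energy_for_u}) is \emph{equivalent} to the identity $\int_0^1 H^t|_{\partial D}\,dt=\pi\alpha$. That identity is not a formal consequence of the winding data and the normalization: for a general Hamiltonian disk isotopy it is false (e.g.\ $H=\tfrac{\pi\beta}{2}(x^2+y^2)^2$ has boundary rotation number $\beta$ but $\int_0^1 H^t|_{\partial D}\,dt=\pi\beta/2$), so its validity here must come from the precise definition of $\Rot(\varphi;H)$ in definition 5 of \cite{Bramham_approx} and/or the pseudo-rotation hypothesis. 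Appealing to ``the normalization built into the definition'' assumes exactly the content of lemma 9 of \cite{Bramham_approx}; as written this step is a gap. (Also, killing the boundary term at $s\to\infty$ needs decay of $\partial_t z(s,\cdot)$, not just $z(s,\cdot)\to 0$ in $C^0$, though this does follow from the asymptotics you cite.)

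For the gradient bounds, your exclusion of boundary blow-up by ``Schwarz reflection plus Liouville'' is incorrect as stated: there are nonconstant bounded holomorphic maps of the half-plane with boundary on $\partial D$, for instance $\zeta\mapsto(\zeta-\mi)/(\zeta+\mi)$, so Liouville's theorem gives no contradiction there. A boundary bubble must be excluded by energy: after removal of the singularity it is a nonconstant holomorphic disk with boundary on $\partial D$, hence a proper branched cover of $D$ of area at least $\pi$, which is impossible once the available energy $\{n_j\alpha\}\pi$ drops below $\pi$; alternatively, on the shrinking balls used in the rescaling the Dirichlet energy of the rescaled maps is bounded by a multiple of $\{n_j\alpha\}\pi$ plus terms that scale away, so no nonconstant bubble (interior or boundary) can form at all. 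In other words, the small-energy input you reserve for the puncture region is also what rules out finite-point concentration; the Liouville argument alone does not. The genuinely delicate uniform-in-$j$ control near the puncture you defer to the structural estimates of \cite{Bramham_approx}, which is in effect what the paper itself does by citing corollary 24 and proposition 22 there.
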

\begin{proof}
The existence of each $\F_n$ as a foliation whose leaves consist of the cylinder $C_n$ and
half cylinders parameterized by elements of $\M(J_n)$ is immediate from
theorem 8 in \cite{Bramham_approx}.  That each $\u\in\M(J_n)$ satisfies
$E_\omega(\u)=\{n\alpha\}\pi$ is lemma 9 in \cite{Bramham_approx}.  Finally the uniform $C^r$-bounds
(\ref{E:grad_bds_for_u}) follow from corollary 24 and proposition 22 in \cite{Bramham_approx}.
\end{proof}

\appendix
\section{}

\subsection{A Sobolev inequality}\label{S:sobolev_estimate}
In this section we prove lemma \ref{L:sobolev_estimate}.
We first consider maps with domain the half plane $\R^{+}\times\R$, and then modify for cylinders
$\R^{+}\times\R/n\Z$.  It is important for us that the Sobolev constant be independent of the
period $n$ of the domain.

All function spaces will implicitely mean real valued functions, e.g.\
$L^\infty(\R^2)=L^\infty(\R^2,\R)$.  Then lemma \ref{L:sobolev_estimate} for maps into $\R^{d}$
will follow by applying the conclusions to each component.  As usual, $C_c^\infty(\Omega)$ means
elements of $C^{\infty}(\Omega)$ having compact support.

\begin{lemma}\label{L:sobolev_estimate_1}
There exists $C>0$ so that
\[
	\|f\|^2_{L^\infty(\R^2)}\leq C \|f\|_{L^2(\R^2)}\|Df\|_{L^\infty(\R^2)}
\]
for all $f\in C_c^\infty(\R^2)$.  The same statement holds if we replace
$\R^2$ by $\R^{+}\times\R$ with no boundary conditions required.
\end{lemma}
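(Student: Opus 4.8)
The plan is to prove the inequality first on the full plane $\R^2$ by a direct one-dimensional argument applied slicewise, and then to deduce the half-plane version by reflection. The key observation is the following elementary fact in one variable: if $g\in C_c^\infty(\R)$ then for any $x_0$ and any $\delta>0$ one has $g(x_0)^2 = -\int_{x_0}^{\infty}\frac{d}{dx}\big(g(x)^2\big)\,dx = -2\int_{x_0}^\infty g(x)g'(x)\,dx$, but a more useful bound comes from splitting: for a point $x_0$ where $|g|$ is large, on an interval of length $\ell$ around $x_0$ we have $|g(x)|\geq |g(x_0)| - \ell\,\|g'\|_{L^\infty}$, so if $\ell = |g(x_0)|/(2\|g'\|_{L^\infty})$ then $|g(x)|\geq |g(x_0)|/2$ on that interval, whence $\|g\|_{L^2}^2 \geq \ell\cdot |g(x_0)|^2/4 = |g(x_0)|^3/(8\|g'\|_{L^\infty})$. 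This gives $\|g\|_{L^\infty}^3 \leq 8\,\|g\|_{L^2}^2\,\|g'\|_{L^\infty}$, i.e.\ a scaling-correct bound in one dimension.

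To get the two-dimensional statement I would not iterate this in a naive way (that produces the wrong powers); instead I would argue directly in $\R^2$. Pick $p=(x_1,x_2)$ with $|f(p)|$ close to $\|f\|_{L^\infty}$, set $L=\|Df\|_{L^\infty(\R^2)}$, and consider the square $Q$ of side $r$ centered at $p$. On $Q$ we have $|f|\geq |f(p)| - r L$ (using the Lipschitz bound in both coordinates, so the diameter term is $\leq \sqrt{2}\,rL$, absorbed into the constant). Choosing $r$ comparable to $\|f\|_{L^\infty}/L$ makes $|f|\geq \tfrac12\|f\|_{L^\infty}$ on $Q$, so $\|f\|_{L^2(\R^2)}^2 \geq |Q|\cdot\tfrac14\|f\|_{L^\infty}^2 = c\,\|f\|_{L^\infty}^4/L^2$. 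Rearranging gives $\|f\|_{L^\infty}^4 \leq C\,\|f\|_{L^2}^2\,\|Df\|_{L^\infty}^2$, which is exactly $\|f\|_{L^\infty}^2 \leq C^{1/2}\|f\|_{L^2}\|Df\|_{L^\infty}$. I should double-check the exponent balance against dimensional scaling $f\mapsto f(\lambda\cdot)$: under this, $\|f\|_{L^\infty}$ is invariant, $\|f\|_{L^2}^2$ scales like $\lambda^{-2}$, and $\|Df\|_{L^\infty}$ scales like $\lambda$, so the right side scales like $\lambda^{-1}\cdot\lambda = \lambda^{0}$ — consistent, good. (Note this confirms that in dimension $2$ the correct inequality is $\|f\|_{L^\infty}^2\lesssim\|f\|_{L^2}\|Df\|_{L^\infty}$, matching the statement; the cube-power one-dimensional identity above was just a sanity check of the method, not what is needed.)

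For the half-plane $\R^+\times\R$ with no boundary conditions, I would extend $f$ to all of $\R^2$ by even reflection across $\{x=0\}$, i.e.\ $\tilde f(x,y)=f(|x|,y)$. Then $\tilde f$ is Lipschitz on $\R^2$ with $\|D\tilde f\|_{L^\infty(\R^2)}=\|Df\|_{L^\infty(\R^+\times\R)}$, it has compact support, and $\|\tilde f\|_{L^2(\R^2)}^2 = 2\|f\|_{L^2(\R^+\times\R)}^2$, $\|\tilde f\|_{L^\infty(\R^2)}=\|f\|_{L^\infty(\R^+\times\R)}$. The reflected function is only Lipschitz, not $C^1$, but since the plane inequality is proved purely from the Lipschitz bound and an $L^2$ lower bound on a square, it applies verbatim to $\tilde f$ (alternatively, mollify slightly and pass to the limit, or invoke density). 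The main — mild — obstacle is keeping the constants explicitly independent of everything and making sure the geometric argument on the square $Q$ is handled cleanly near the boundary when $p$ lies close to $\{x=0\}$: after reflection $Q$ may straddle the reflection axis, but $\tilde f$ still satisfies $|\tilde f|\geq\tfrac12\|\tilde f\|_{L^\infty}$ on all of $Q$, so nothing is lost. The passage from this lemma to the cylinder version stated as Lemma \ref{L:sobolev_estimate} — periodizing the domain to $\R^+\times\R/n\Z$ while keeping $c$ independent of $n$ — will be carried out in the remainder of the appendix by lifting to the universal cover $\R^+\times\R$ and observing that all three norms are unchanged (the $L^2$ and $L^\infty$ norms of a periodic function over one period equal those over the cover when computed correctly, or one simply truncates a fundamental domain with a cutoff).
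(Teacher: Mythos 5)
Your argument is correct, and it takes a genuinely different route from the paper. You argue geometrically: at a point $p$ where $|f|$ (essentially) attains its sup $M$, the Lipschitz bound $L=\|Df\|_{L^\infty}$ forces $|f|\geq M/2$ on a square of side comparable to $M/L$, so $\|f\|_{L^2}^2\gtrsim M^4/L^2$, which rearranges to the stated inequality (with the trivial case $L=0$, hence $f\equiv 0$, handled separately); the half-plane case is reduced to the plane by even reflection, and you correctly note that the reflected function is merely Lipschitz but that your plane argument only uses the Lipschitz bound (or one can mollify). The paper instead proves the inequality by a slicewise integral trick in the style of Ladyzhenskaya-type interpolation: from $|f(x,y)|^{p+1}\leq (p+1)\|\partial_1 f\|_{L^\infty}\int_\R |f(s,y)|^p\,ds$ applied with $p=3$ and then $p=2$, one gets $|f|^4\leq 12\,\|Df\|_{L^\infty}^2\|f\|_{L^2}^2$, and the half-plane version is obtained by simply integrating from $+\infty$ instead of $-\infty$, with no reflection needed. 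Both yield explicit constants independent of everything; your approach is arguably more elementary and robust (it needs only the Lipschitz property and works on any domain where the large square can be placed, e.g.\ directly on the half-plane by putting $p$ at a corner of the square, avoiding reflection altogether), while the paper's integral method feeds more directly into the subsequent cutoff computation on the cylinders $\R^+\times\R/n\Z$, where the period-independence of the constant is extracted via a cutoff on a fundamental domain rather than by the universal-cover heuristic you sketch at the end (as stated, lifting a periodic function to the cover does not preserve the $L^2$ norm, so the cutoff route of the paper's second lemma is the one to use there).
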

\begin{proof}
Let $f\in C_c^\infty(\R^2)$, $p\geq0$ a real number, and $(x,y)\in\R^2$.  Then
\begin{align}
		|f(x,y)|^{p+1}&\leq \int_{-\infty}^x|\partial_1(f(s,y))^{(p+1)}|ds\nonumber \\
		&\leq (p+1)\|\partial_1f\|_{L^\infty(\R^2)}\int_{\R}|f(s,y)|^p ds.\label{E:222}
\end{align}
Applying this with $p=3$,
\begin{equation}\label{E:223}
		|f(x,y)|^{4}\leq 4\|Df\|_{L^\infty(\R^2)}\int_\R|f(s,y)|^3 ds.
\end{equation}
While applying (\ref{E:222}) with $p=2$,
\begin{equation}\label{E:224}
		|f(s,y)|^{3}\leq 3\|Df\|_{L^\infty(\R^2)}\int_\R|f(s,t)|^2 dt
\end{equation}
for each $s\in\R$.  Substituting (\ref{E:224}) into (\ref{E:223}),
\begin{align*}
  |f(x,y)|^{4}&\leq 12\|Df\|^2_{L^\infty(\R^2)}\int_\R\left(\int_\R|f(s,t)|^2 dt\right)ds \\
	      &=12\|Df\|^2_{L^\infty(\R^2)}\|f\|^2_{L^2(\R^2)}.
\end{align*}
Square rooting both sides we are done.  When
the domain is $\R^{+}\times\R$ the same argument goes through almost word
for word (this time integrating from $+\infty$ to $x$ to obtain equation (\ref{E:222})).
\end{proof}

\begin{lemma}\label{L:sobolev_estimate_2}
There exists $c>0$ so that for all $n\geq 1$,
\[
   \|f\|^2_{L^\infty(\R\times\R/n\Z)}\leq c \|f\|_{L^2(\R\times\R/n\Z)}\|f\|_{W^{1,\infty}(\R\times\R/n\Z)}
\]
for all $f\in C_c^\infty(\R\times\R/n\Z)$.  The same statement holds if we replace $\R\times\R/n\Z$
by $\R^{+}\times\R/n\Z$ with no boundary conditions required.
\end{lemma}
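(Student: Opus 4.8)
The idea is to deduce Lemma~\ref{L:sobolev_estimate_2} from the (half-)plane estimate of Lemma~\ref{L:sobolev_estimate_1} by localising the periodic variable with a \emph{single, $n$-independent} cutoff function. One cannot simply unroll the cylinder $\R\times\R/n\Z$ to the plane $\R^2$, because the $n$-periodic lift of a nonzero $f$ has infinite $L^2$-norm; the purpose of the cutoff is precisely to replace this lift by a genuinely compactly supported function, without changing pointwise values and without inflating the $W^{1,\infty}$-norm by more than a universal factor.

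\textbf{Step 1: the cutoff.} Fix once and for all $\chi\in C_c^\infty(\R,[0,1])$ with $\chi\equiv 1$ on a neighbourhood of $0$ and $\supp\chi\subset(-1,1)$, and set $\chi_{t_0}(t):=\chi(t-t_0)$ for $t_0\in\R$. Given $f\in C_c^\infty(\R\times\R/n\Z)$, let $\wt f$ be its $n$-periodic lift to $\R\times\R$ (respectively $\R^+\times\R$). For any point $(x_0,t_0)$ in the cylinder the function $g:=\chi_{t_0}\cdot\wt f$ lies in $C_c^\infty(\R^2)$ (respectively in $C_c^\infty(\R^+\times\R)$), so Lemma~\ref{L:sobolev_estimate_1} applies and yields
\[
  \|g\|_{L^\infty}^2\le C\,\|g\|_{L^2}\,\|Dg\|_{L^\infty}.
\]

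\textbf{Step 2: estimating the three factors.} First, since $\chi_{t_0}(t_0)=\chi(0)=1$ we have $|\wt f(x_0,t_0)|\le\|g\|_{L^\infty}$. Second, $\supp\chi_{t_0}$ has length $2$ and $\wt f$ is $n$-periodic in $t$, so the slab $\R\times\supp\chi_{t_0}$ covers the circle $\R/n\Z$ at most $\lceil 2/n\rceil\le 2$ times, whence $\|g\|_{L^2(\R^2)}^2\le 2\,\|f\|_{L^2(\R\times\R/n\Z)}^2$ (for $n\ge 2$ the factor is in fact $1$). Third, writing $Dg=\chi_{t_0}D\wt f+\wt f\,D\chi_{t_0}$ and using $0\le\chi\le 1$,
\[
  \|Dg\|_{L^\infty}\le\|D\wt f\|_{L^\infty}+\|\chi'\|_{L^\infty}\|\wt f\|_{L^\infty}\le\bigl(1+\|\chi'\|_{L^\infty}\bigr)\|f\|_{W^{1,\infty}(\R\times\R/n\Z)},
\]
since $\|\wt f\|_{L^\infty}=\|f\|_{L^\infty(\R\times\R/n\Z)}$ and $\|D\wt f\|_{L^\infty}=\|Df\|_{L^\infty(\R\times\R/n\Z)}$ are both $\le\|f\|_{W^{1,\infty}(\R\times\R/n\Z)}$. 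Combining the three bounds and taking the supremum over $(x_0,t_0)$ gives the claim with $c:=\sqrt2\,C\,(1+\|\chi'\|_{L^\infty})$. The half-cylinder case is word-for-word the same, using instead the $\R^+\times\R$ version of Lemma~\ref{L:sobolev_estimate_1}; and the vector-valued Lemma~\ref{L:sobolev_estimate} then follows by applying the scalar estimate to each component, at the cost of a constant depending only on $d$.

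\textbf{Main obstacle.} The only real content is the uniformity of $c$ in $n$: this is exactly what forbids choosing $\chi$ depending on $n$, and it is why we allow ourselves the slightly lossy bound $\lceil 2/n\rceil\le 2$ rather than trying to be sharp for small $n$. Everything else is routine.
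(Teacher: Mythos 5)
Your proof is correct and follows essentially the same route as the paper: lift $f$ to the plane, cut off in the periodic variable so that Lemma \ref{L:sobolev_estimate_1} applies, and check that the cutoff costs only $n$-independent factors in the $L^2$- and $W^{1,\infty}$-norms. The only cosmetic difference is that you use a fixed width-$2$ cutoff translated to each point $(x_0,t_0)$ and then take a supremum, whereas the paper uses a single cutoff equal to $1$ on the whole period $[0,n]$ (so its support does depend on $n$, but with $|\chi'|\le 2$ uniformly and an $L^2$-overlap factor bounded by $3$), which produces the sup bound in one application; in particular the uniformity in $n$ does not actually forbid an $n$-dependent cutoff, only one whose derivative or overlap multiplicity grows with $n$.
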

\begin{proof}
Let $f\in C_c^\infty(\R\times\R/n\Z)$.  Let $\fbar\in C^\infty(\R^2)$ be a
lift of $f$.  Then
\[
		    \fbar(x,y+n)=\fbar(x,y)
\]
for all $(x,y)\in\R^2$, and $\lim_{x\rightarrow\pm\infty}\fbar(x,y)=0$
for all $y\in\R$.  Pick a smooth ``cut-off function'' $\chi:\R\rightarrow[0,1]$
having support in $(-1,n+1)$, and identically equal to $1$ on $[0,n]$,
and such that $|\chi'(t)|\leq 2$ for all $t\in\R$.   Define $g\in C_c^\infty(\R^2)$ by
\[
		  g(x,y):=\chi(y)\fbar(x,y).
\]
Then $g|_{\R\times[0,n]}\equiv\fbar|_{\R\times[0,n]}$.  Let $C>0$ be
the embedding constant from lemma \ref{L:sobolev_estimate_1}.  Then,
\[
	\|g\|^2_{L^\infty(\R^2)}\leq C \|g\|_{L^2(\R^2)}\|Dg\|_{L^\infty(\R^2)}.
\]
Therefore,
\begin{align*}
	\|\fbar\|^2_{L^\infty(\R\times[0,n])}&=\|g\|^2_{L^\infty(\R\times[0,n])} \nonumber \\
			    &\leq C \|g\|_{L^2(\R^2)}\|Dg\|_{L^\infty(\R^2)} \nonumber \\
	 &\leq C\|\fbar\|_{L^2(\R\times[-n,2n])}
	    \left(\|\chi'\fbar\|_{L^\infty(\R^2)}+ \|\chi (D\fbar)\|_{L^\infty(\R^2)} \right)
		  \label{E:231}
\intertext{using that $n\geq1$,}
	 &\leq C 3\|\fbar\|_{L^2(\R\times[0,n])}
	    \left(2\|\fbar\|_{L^\infty(\R^2)}+ \|D\fbar\|_{L^\infty(\R^2)} \right)\\
	 &\leq 6C \|f\|_{L^2(\R\times\R/n\Z)}
	    \left(\|f\|_{L^\infty(\R\times\R/n\Z)}+ \|Df\|_{L^\infty(\R\times\R/n\Z)} \right). \nonumber
\end{align*}
In other words,
\[
    \|f\|^2_{L^\infty(\R\times\R/n\Z)}
	    \leq 6C \|f\|_{L^2(\R\times\R/n\Z)}\|f\|_{W^{1,\infty}(\R\times\R/n\Z)}
\]
as required.  The same argument applies when the domain is $\R^{+}\times\R/n\Z$.
\end{proof}

\subsection{$\L_*$ is dense in $\R$}\label{S:density_L_0}
We refer to (\ref{E:exponential_Liouville_condition}) in the introduction for the definition of $\L_{*}$.
The following is a well known argument for spaces like $\L_{*}$.

\begin{lemma}
$\L_*$ is a dense subset of $\R$.
\end{lemma}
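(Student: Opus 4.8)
The plan is to show that $\L_*$ is dense in $\R$ by constructing, for any target real number $\theta$ and any tolerance, an element of $\L_*$ within that tolerance. The natural approach is the classical one for Liouville-type sets: build the element as a rapidly converging series of rationals whose partial sums are so close to the limit that the defining inequality \eqref{E:exponential_Liouville_condition} is automatically satisfied with $q$ the denominator of the partial sum.

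Concretely, I would first reduce to showing that every open interval $I\subset\R$ contains a point of $\L_*$; by translating, it suffices to produce Liouville-type numbers near $0$, or one can just work directly in $I$. Fix a rational $p_0/q_0\in I$ and a small $\epsilon>0$ with the $\epsilon$-ball around $p_0/q_0$ inside $I$. Then define a sequence of denominators $q_k$ growing extremely fast --- for instance $q_{k+1}=q_k\cdot \lceil e^{(k+1)q_k}\rceil$ or something even more lavish, chosen so that $1/q_{k+1}$ beats $e^{-(k+1)q_k}$ with room to spare --- and set $\alpha=\sum_{k\geq 0} a_k/q_k$ for suitable integer numerators $a_k$ (e.g.\ $a_0=p_0$, $a_k\in\{0,1\}$ for $k\geq 1$), arranging that the tail $\sum_{k\geq 1} a_k/q_k<\epsilon$ so that $\alpha\in I$. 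The key estimate is that the $N$-th partial sum $P_N/Q_N$ (with $Q_N=q_N$ after clearing denominators, since each $q_k\mid q_{k+1}$) satisfies $|\alpha-P_N/Q_N|\leq \sum_{k>N} 1/q_k \leq 2/q_{N+1}$, and by the growth condition $2/q_{N+1}<e^{-(N+1)q_N}\leq e^{-k Q_N}$ for every fixed $k$ once $N$ is large, which is exactly condition \eqref{E:exponential_Liouville_condition}. One also checks $\alpha$ is irrational because the approximation is faster than any rational could be approximated by distinct rationals. Finally, one may need to pass from $(P_N,Q_N)$ to a coprime pair by dividing out the gcd, which only decreases the denominator and hence only strengthens the inequality.

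The main obstacle --- really the only delicate point --- is bookkeeping the growth rates so that the exponential Liouville condition \eqref{E:exponential_Liouville_condition} holds for \emph{all} $k\in\N$ simultaneously, not just asymptotically: for each $k$ one must exhibit \emph{some} fraction $p/q$ with $|\alpha-p/q|<e^{-kq}$, and the partial sums $P_N/Q_N$ with $N\geq k$ (say) do the job provided $q_{N+1}$ was chosen large enough relative to $k q_N$. Since the $q_k$ are defined recursively and we get to choose them as large as we like at each stage, this is arranged by a simple ``diagonal'' choice --- e.g.\ demand $q_{k+1}\geq e^{(k+1)^2 q_k}$ --- but it should be stated carefully. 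I would also remark that this is the standard construction showing the original Liouville set $\L$ is dense and of measure zero, adapted by replacing the polynomial rate $q^{-k}$ with the exponential rate $e^{-kq}$, the latter being a strictly stronger requirement that the same telescoping construction handles with no extra difficulty.
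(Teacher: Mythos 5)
Your construction is correct in outline, but it is a genuinely different argument from the one in the paper. The paper proves density softly, via the Baire category theorem: for each $k$ it forms the open set $\U_k=\bigcup_{(p,q)=1}\O_k(p,q)$, where $\O_k(p,q)$ is the punctured $e^{-kq}$-neighborhood of $p/q$; each $\U_k$ is dense because it accumulates on every rational, and $\L_*=\bigcap_k\U_k$ is then a countable intersection of open dense sets, hence dense. That route costs essentially no bookkeeping and yields the stronger conclusion that $\L_*$ is a dense $G_\delta$ (residual) set, but it produces no explicit examples. Your route is the classical constructive one: a rapidly convergent series $\sum_k a_k/q_k$ with $q_k\mid q_{k+1}$ and $q_{k+1}$ chosen super-exponentially large relative to $kq_k$, so that the partial sums $P_N/Q_N$ witness \eqref{E:exponential_Liouville_condition} for every $k$; this is more elementary (no Baire), exhibits concrete members of $\L_*$ in any prescribed interval, and your remarks on coprimality (reducing the fraction only shrinks $q$ and strengthens the inequality) and on verifying the condition for \emph{all} $k$ rather than asymptotically are exactly the points that need care, and your diagonal choice $q_{k+1}\geq e^{(k+1)^2q_k}$ handles them. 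One small item to pin down: with $a_k\in\{0,1\}$ you must insist that $a_k\neq 0$ for infinitely many $k$ (simplest: $a_k=1$ for all $k\geq1$, with $q_1$ large enough that the tail is below $\epsilon$); otherwise $\alpha$ is rational, and your irrationality argument also needs the tails to be nonzero so that $P_N/Q_N\neq\alpha$ before invoking the bound $|\alpha-P_N/Q_N|\geq 1/(bQ_N)$ for a putative rational $\alpha=a/b$. With that made explicit, your proof is complete and entirely adequate for the lemma.
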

\begin{proof}
For each $(p,q)\in\Z\times\N$ relatively prime, and $k\in\N$, define the following punctured open neighborhood
of $p/q\in\R$:
\[
	\O_k(p,q):=\left\{x\in\R\,\Big|\,0<\left|x-\frac{p}{q}\right|<\frac{1}{e^{k q}}\right\}.
\]
For each $k\in\N$ the following countable union is therefore also an open subset of $\R$,
\[
	      \U_k:=\bigcup_{(p,q)\in\Z\times\N,\ (p,q)=1}\O_k(p,q).
\]
Fix $k\in\N$.  Then each rational number $\omega\in\Q$ lies in the closure of $\U_k$ since if $\omega=p/q$ in
lowest form then $\omega$ lies in the closure of $\O_k(p,q)$.  Thus, as the rationals are dense in $\R$,
$\U_k$ must be dense in $\R$.  This applies to all $k\in\N$, so
\[
		  \L_*=\bigcap_{k\in\N}\U_k
\]
is a countable intersection of open dense subsets of the complete metric space $\R$.  By the
Baire category theorem $\L_*$ is therefore dense in $\R$.
\end{proof}


\begin{thebibliography}{amsalpha}
\bibitem{Amann} H. Amann, Ordinary differential equations: an introduction to non-linear analysis. Translated
from the German by Gerhard Metzen.  De Gruyter Studies in Mathematics. de Gruyter Studies in Mathematics, 13.
Walter de Gruyter \& Co., Berlin, 1990.

\bibitem{AnosovKatok} D. V. Anosov, A. B. Katok, New examples in smooth ergodic theory.  Ergodic diffeomorphisms.
Trans. Moscow Math. Soc. 23 (1970), 1-35

%

\bibitem{Bes} A. Besicovitch, A problem on topological transformations of the plane. II.
Proc. Cambridge Philos. Soc. 47, (1951). 38-45.

\bibitem{BEHWZ_SFT_compactness} F. Bourgeois, Y. Eliashberg, H. Hofer, K. Wysocki and E. Zehnder,
Compactness results in symplectic field theory. Geom. Topol. 7 (2003), 799-888 (electronic).

\bibitem{Bramham} B. Bramham, Finite energy foliations with a chosen binding orbit.  In preparation.

\bibitem{Bramham_approx} B. Bramham, Periodic approximations of irrational pseudo-rotations using
pseudoholomorphic curves, arXiv:1204.4694.

\bibitem{BH_symplectic_dynamics} B. Bramham, H. Hofer, First steps towards a symplectic dynamics.
Surveys in Differential Geometry, Vol. 17 (2012), 127-178.

%
\bibitem{Cieliebak_Volkov} K. Cieliebak, E. Volkov, First steps in stable Hamiltonian topology.
arXiv:1003.5084.
%

\bibitem{FayadKatok} B. Fayad, A. Katok, Constructions in elliptic dynamics. Ergodic Theory Dynam.
Systems 24 (2004), no. 5, 1477-1520.

\bibitem{FayadKrikorian} B. Fayad, R. Krikorian, Herman's last geometric theorem,
Ann. Sci. Ecole Norm. Sup. 42 (2009), 193-219

\bibitem{Fayad_Sap} B. Fayad, M. Saprykina, Weak mixing disc and annulus diffeomorphisms with arbitrary
Liouville rotation number on the boundary. Ann. Sci. \'Ecole Norm. Sup. (4) 38 (2005), 339-364.

\bibitem{Floer_unreg_grad_flow} A. Floer, The unregularized gradient flow of the symplectic action.
Comm. Pure Appl. Math. 41 (1988), no. 6, 775-813.

%
%
%
%
\bibitem{Hedlund} G. Hedlund, A class of transformations of the plane.  Proc. Cambridge Philos. Soc. 51
(1955), 554-564.

\bibitem{Herman_ICM} M. Herman, Some open problems in dynamical systems. Proceedings of the International Congress of Mathematicians, Vol. II (Berlin, 1998). Doc. Math. 1998, Extra Vol. II, 797-808

%
%
%
%
%
%
%
%

%
\bibitem{Katok_entropy_and_horseshoes} A. Katok, Lyapunov exponents, entropy and periodic orbits for
diffeomorphisms.  Inst. Hautes \'Etudes Sci. Publ. Math. No. 51 (1980), 137-173.
%
%
%
%
\bibitem{Koc} A. Ko$\check{\textup{c}}$ergin,  Mixing in special flows over a rearrangement
of segments and in smooth flows on surfaces. Mat. Sb. (N.S.) 96 (138) (1975), 471-502.

%
%
%
\bibitem{Pesin} Ya. Pesin, Characteristic Ljapunov exponents, and smooth ergodic theory.
Russian Math. Surveys 32 (1977), no. 4 (196), 55-112.

\bibitem{Scottish} Scottish Book, http://kielich.amu.edu.pl/Stefan\_Banach/e-scottish-book.html.

\bibitem{Shnirelman} L. G. Shnirelman, An example of a transformation of the plane (Russian).  Proc.
 Don Polytechnic Inst (Novochekassk) 14 (Science section, Fis-math. part) (1930), 64-74.

%
%
%
\bibitem{Walters} P. Walter, An introduction to ergodic theory.
Graduate Texts in Mathematics, 79. Springer-Verlag, New York-Berlin, 1982.


%
%
%



\end{thebibliography}
\end{document}